\theoremstyle{plain}
\newtheorem{thm}{Theorem}[section]
\newtheorem{prop}[thm]{Proposition}
\theoremstyle{definition}
\newtheorem{defn}[thm]{Definition}
\newtheorem{rem}[thm]{Remark}
\newtheorem{ex}[thm]{Example}
\newcounter{thestep}
\medskip\refstepcounter{thestep}{{\textit{Step~\arabic{thestep}.}}}\begin{itshape}}
\newcounter{thehypothesis}
\newcounter{theproperty}
\newcommand{\compcent}[1]{\vcenter{\hbox{$#1\circ$}}}
\newcommand{\comp}{\mathbin{\mathchoice {\compcent\scriptstyle}{\compcent\scriptstyle}
{\compcent\scriptscriptstyle}{\compcent\scriptscriptstyle}}}
\newcommand{\om}{\omega}
\newcommand{\Om}{\Omega}
\renewcommand{\a}{\alpha}
\renewcommand{\b}{\beta}
\renewcommand{\hom}[3]{ {\mathit{Hom}_{\mathbb{#1}} \left ( {#2} , {#3}\right )} }
\newcommand{\im}[1]{{\mathrm{Im}\,  #1  }}
\newcommand{\re}[1]{{\mathrm{Re}\,  #1  }}
\newcommand{\ImH}{{ \mathrm{Im}\,\mathbb H }}
\newcommand{\hkred}{{/\!\! /\!\! /}}
\newcommand{\C}{{\mathbb{C}}}
\newcommand{\R}{{\mathbb{R}}}
\newcommand{\Z}{{\mathbb{Z}}}
\renewcommand{\H}{{\mathbb{H}}}
\newcommand{\bM}{{\mathbb M}}
\newcommand{\Mcasd}{{\mathcal M_{casd}}}
\DeclareMathOperator{\sign}{sign}
\DeclareMathOperator{\dirac}{{\mathcal D}}
\newcommand{\Comment}[2][\empty]{\ifthenelse{\equal{#1}{\empty}}{\todo[color=blue!15]{#2}}{\todo[color=blue!15,#1]{#2}}}
\newcommand{\hK}{{hyperK{\"a}hler }}
\newcommand{\gSW}{{generalized Seiberg--Witten }}
\begin{document}

\title{Dirac operators in gauge theory}%
\author{Andriy Haydys\\%
\textit{University of Bielefeld}\\%
}%

\date{November 5, 2013}


\maketitle

\begin{abstract}
This paper is a mixture of expository material and current research material. Among new results are examples of generalized harmonic spinors and their gauged version, the \gSW equations.
\end{abstract}

\section{Introduction}

A lot of advances in geometry and topology of low dimensional manifolds are intimately related to gauge theory. Recently a lot of interest attracted a variant of the anti-self-duality theory for higher dimensional manifolds equipped with metrics with special holonomies~\cite{DonaldsonThomas:98}.  The anti-self-dual (asd) instantons on such manifolds can blow-up along certain subspaces of co-dimension four~\cite{Tian:00}. It is argued in~\cite{DonaldsonSegal:09} and~\cite{Haydys:12_GaugeTheory_jlms} that in the limit one obtains certain ``generalized harmonic spinors'', which are harmonic sections of fiber bundles, the fibers of which are diffeomorphic to the moduli space of asd instantons on $\R^4$. On of the purposes of this paper is to present examples of generalized harmonic spinors and their gauged version, namely the \gSW equations.   

The exposition is chosen so that the material should be accessible for a reader not familiar with the basics of spin geometry or the Seiberg--Witten theory. Throughout the focus is on examples.

\medskip

Section~\ref{Sect_ClassDirac} is a rapid introduction to Dirac operators and the Seiberg--Witten theory. We briefly introduce Dirac operators focusing on dimension four, which has the advantage that the corresponding spin group can be constructed ``from scratch''. The spin groups in low dimensions are intimately related to quaternions and the language of quaternions is emphasized throughout. This has a twofold purpose. First, for an unprepared reader this is a quick way to understand what Dirac operators are at least in low dimensions. Secondly, this prepares the ground for a certain generalization of  Dirac operators considered later on. We finish the first part with the Seiberg--Witten equations emphasizing again the role of quaternions.   

In Section~\ref{Sect_genDirac} we introduce the generalized Dirac operator, whose zeros are the generalized harmonic spinors mentioned above. The generalized Dirac operator has its origins in physics~\cite{AnselmiFre:95} and was later considered also in mathematical literature~\cite{Taubes:99,Pidstrygach:04,  Haydys_ahol:08}. The idea of the generalization is very simple. Recall that for the Euclidean space $\R^4$ the Dirac equation can be written in the form
\begin{equation}\label{Eq_DiracR4}
\frac {\partial u}{\partial x_0}- i\frac {\partial u}{\partial x_1}- j\frac {\partial u}{\partial x_2} - k\frac {\partial u}{\partial x_3}=0,\qquad u\colon \R^4\to \H.
\end{equation}
This clearly generalizes for maps $u\colon \R^4\to M$ provided $M$ is a hypercomplex manifold. A generalization of~\eqref{Eq_DiracR4} for non-flat source manifolds requires either some further restrictions on the target $M$ or some  additional structure on the source manifold. In this paper the first possibility is pursued, while a realization of the second one can be found for instance  in~\cite{Salamon_hKFloer:08}.

In Section~\ref{Sect_gSW} examples of the \gSW equations are presented. With a suitable choice of the target space, these equations make sense for any four-manifold (or three-manifold). However, to omit  technical details, only the case of $\R^4$ as the source manifold is considered.  

\medskip
\textsc{Acknowledgement.} I am thankful to an anonymous referee for helpful comments.

\section{Dirac operators and the Seiberg--Witten equations}\label{Sect_ClassDirac}

\subsection{Clifford algebras and spin groups in low dimensions}
The purpose of this subsection is to recall briefly the notions of Clifford algebra and spin group focusing on low dimensions.  More details can be found for instance in~\cite{LawsonMichelsohn:89}. 

Since $\pi_1(SO(n))\cong\Z/2\Z$ for any $n\ge 3$, there is a simply connected Lie group denoted by $Spin(n)$ together with a homomorphism $Spin(n)\to SO(n)$, which is a double covering. This characterizes $Spin(n)$ up to an isomorphism. The spin groups can be constructed explicitly with the help of Clifford algebras, however in low dimensions this can be done using the quaternions only. This is the approach taken here.

Let $\H$ denote the algebra of quaternions. Denote
\[
Sp(1)=\{q\in\H\mid |q|=1 \}\cong S^3.
\]
Clearly, this is a simply connected Lie group. Furthermore, identify $\ImH=\{ \bar h=- h \}$ with $\R^3$ and consider the homomorphism
\begin{equation}\label{Eq_Sp1ToSO3}
\a\colon Sp(1)\to SO(3),\qquad q\mapsto A_q,
\end{equation}
where $A_q h=qh\bar q$. It is easy to check that the corresponding Lie-algebra homomorphism is in fact an isomorphism. Since $SO(3)$ is connected, $\a$ is surjective. Moreover,  $\ker\a = \{\pm 1\}$. Hence, \eqref{Eq_Sp1ToSO3} is a non-trivial double covering, i.e., $Spin(3)\cong Sp(1)$.   


\smallskip

To construct the group $Spin(4)$, first recall that the Hodge operator $*$ yields the splitting $\Lambda^2(\R^4)^*=\Lambda_+^2(\R^4)^*\oplus \Lambda^2_-(\R^4)^*$, where $\Lambda^2_\pm(\R^4)^*=\{\om\mid *\om=\pm\om\}$. Since $\mathfrak{so}(4)\cong \Lambda^2(\R^4)^*=\Lambda_+^2(\R^4)^*\oplus \Lambda^2_-(\R^4)^*=\mathfrak{so}(3)\oplus \mathfrak{so}(3)$, the adjoint representation yields a homomorphism $SO(4)\to SO(3)\times SO(3)$.

Identify $\R^4$ with $\H$ and consider the homomorphism\footnote{We adopt the common convention $Sp_\pm(1)=Sp(1)$. The significance of the subscripts ``$\pm$'' will be clear below.}
\[
\b\colon Sp_+(1)\times Sp_-(1)\to SO(4),\qquad (q_+, q_-)\mapsto A_{q_+\!,\, q_-},
\]
where $A_{q_+\!,\, q_-}h= q_+h \bar q_-$. An explicit computation shows that the composition $Sp_+(1)\times Sp_-(1)\to SO(4)\to SO(3)\times SO(3)$ is given by $(q_+, q_-)\mapsto (A_{q_+}, A_{q_-})$. Hence, the Lie algebra homomorphism corresponding to $\b$ is an isomorphism and $\ker\b$ is contained in $\{(\pm 1, \pm 1)\}$. As it is readily checked, $\ker\b=\{ \pm(1,1)\}\cong \Z/2\Z$. Hence, $Sp_+(1)\times Sp_-(1)\cong Spin(4)$.

\medskip

Let $U$ be an Euclidean vector space. Then the Clifford algebra $Cl(U)$ is the tensor algebra $TU=\R\oplus U\oplus U\otimes U\oplus\dots$ modulo the ideal generated by elements $u\otimes u +|u|^2\cdot 1$. In other words, $Cl(U)$ is generated by elements of $U$ subject to the relations $u\cdot u= -|u|^2$. For instance, $Cl(\R^1)\cong \R[x]/(x^2+1)\cong\C$. The algebra $Cl(\R^2)$ is generated by $1, e_1, e_2$ subject to the relations $e_1^2=-1=e_2^2$ and $e_1\cdot e_2=-e_2\cdot e_1$, which follows from $(e_1+e_2)^2=-2$. In other words,  $Cl(\R^2)\cong\H$. In general, $Cl(\R^n)$ is generated by $1,e_1,\dots, e_n$ subject to the relations $e_i^2=-1$ and $e_i\cdot e_j=-e_j\cdot e_i$ for $i\neq j$.

It is convenient to have some examples of modules over Clifford algebras. Such module is given by a vector space $V$ together with a map
\[
U\otimes V\to V, \qquad u\otimes v\mapsto u\cdot v,
\] 
which satisfies $u\cdot (u\cdot v)= -|u|^2 v$ for all $u\in U$ and $v\in V$. An example of a $Cl(U)$--module is $V=\Lambda U^*$, where the $Cl(U)$--module structure is given by the map
\[
u\otimes \varphi\mapsto \imath_u\varphi - \langle u,\cdot\rangle\wedge\varphi.
\]

Let $V$ be a quaternionic vector space. Then the quaternionic multiplication gives rise to  the map $\ImH\otimes V\to V$, $h\otimes v\mapsto h\cdot v$, which satisfies $h\cdot (h\cdot v)=-h\bar h v=-|h|^2v$. Thus any quaternionic vector space is a $Cl(\R^3)$--module. In particular, the fundamental representation $W\cong\H$ of $Sp(1)\cong Spin(3)$ with the action given by the left multiplication is a $Cl(\R^3)$--module.

Similarly, for any quaternionic vector space $V$ the space $V\oplus V$ is a $Cl(\R^4)$--module. Indeed, the $Cl(\R^4)$--module structure is induced by the map 
\begin{equation}\label{Eq_ClInDim4}
\H\otimes_\R (V\oplus V)\to V\oplus V,\qquad
h\otimes (v_1, v_2)\mapsto (h v_2,-\bar hv_1)=
\begin{pmatrix}
  0 &  h\\
 -\bar h & 0
\end{pmatrix}
\begin{pmatrix}
  v_1\\ v_2
\end{pmatrix}.
\end{equation}
In particular, the $Sp_+(1)\times Sp_-(1)$--representation $W^+\oplus W^-$ is a $Cl(\R^4)$--module. Here, as the notation suggests, $W^\pm$ is the fundamental representation of $Sp_\pm(1)$.

\subsection{Dirac operators}
Let $X$ be a Riemannian manifold. Denote by $Cl(X)\to X$ the bundle, whose fiber at a point $x\in X$ is $Cl(T_xX)\cong Cl(T_x^*X)$. Let $E\to X$ be a bundle of $Cl(X)$--modules, i.e., there is a morphism of vector bundles 
\[
Cl\colon TX\otimes E\to E,\qquad (v,e)\mapsto v\cdot e,
\]
such that $v\cdot (v\cdot e)=-|v|^2 e$. Then $E$ is called a Dirac bundle if it is equipped with an Euclidean scalar product and a compatible connection $\nabla$ such that the following conditions hold:
\begin{itemize}
\item $\langle v\cdot e_1, v\cdot e_2 \rangle= |v|^2\langle e_1, e_2 \rangle$ for any $v\in T_xX$ and $e_1, e_2\in E_x$;
\item $\nabla (\varphi\cdot s)= (\nabla \varphi)\cdot s + \varphi\cdot\nabla s$ for any $\varphi\in\Gamma(Cl(X))$ and $s\in\Gamma(E)$. 
\end{itemize}
Here $\nabla\varphi$ is obtained by extending the Levi--Civita connection from $TX$. 
\begin{defn}
If $E$ is a Dirac bundle, the operator
\[
\dirac\colon \Gamma(E)\xrightarrow{\ \nabla\ }\Gamma(T^*X\otimes E)\xrightarrow{\ Cl\ }\Gamma(E)
\] 
is called the Dirac operator of $E$. 
\end{defn}

The Dirac operator is a (formally) self-adjoint first order elliptic partial differential operator. Elements of $\ker\dirac$ are called harmonic.  An example of a Dirac operator  is given by choosing $E=\Lambda T^*X$, for which the corresponding Dirac operator is $\dirac= d+ \delta$~\cite[Thm~5.12]{LawsonMichelsohn:89}, where $\delta =\pm * d*$ and the sign depends on the dimension of the manifold and the degree of a form.

\medskip

For the sake of simplicity, let us focus on a low dimensional case, say dimension four. Thus, from now on $X$ denotes a Riemannian four-manifold. It is also convenient to assume that $X$ is oriented.  

As already mentioned above, the space $\H\oplus \H$ is a $Cl(\R^4)$--module. There are at least three ways to construct a Dirac bundle from this $Cl(\R^4)$--module.

One way is as follows. Denote by $P_{SO}\to X$  the $SO(4)$--bundle of oriented orthonormal frames. Then $X$ is called spin, if there is a $Spin(4)$--bundle $P_{Spin}\to X$, which is a fiberwise double covering of $P_{SO}$. Assume $X$ is spin and choose a spin structure. Then, considering $\H\oplus \H$ as the representation $W^+\oplus W^-$, one obtains the associated bundle still denoted by  $W^+\oplus W^-$. This bundle is called the spinor bundle of $X$ and its sections are called spinors.  The spinor bundle is a Dirac bundle and the corresponding Dirac operator is of the form
\[
\dirac=
\begin{pmatrix}
  0 & \dirac^-\\
 \dirac^+ & 0
\end{pmatrix},
\qquad \dirac^\pm\colon\Gamma(W^\pm)\to\Gamma(W^\mp).
\] 
The components  $\dirac^\pm$ are called Dirac operators too. For instance, in the case  $X=\R^4$ it readily follows from~\eqref{Eq_ClInDim4} that $\dirac^\pm\colon C^\infty(\R^4;\H)\to C^\infty(\R^4;\H)$ can be written as
\[
 \dirac^+ = -\frac \partial{\partial x_0} + i\frac \partial{\partial x_1} +  j\frac \partial{\partial x_2} + k\frac \partial{\partial x_3},\qquad 
\dirac^- =\frac \partial{\partial x_0} + i\frac \partial{\partial x_1} +  j\frac \partial{\partial x_2} + k\frac \partial{\partial x_3}.
\]

The second way is a slight modification of the first one. Namely, denote by $Spin^c(4)=(Spin(4)\times S^1)/\pm 1$ and consider a $Spin^c(4)$--bundle $P_{Spin^c}\to X$ such that $P_{Spin^c}/S^1\cong P_{SO}$ (this isomorphism is fixed throughout). Unlike spin structures, spin$^{\mathrm c}$ structures always exist on oriented four-manifolds. Furthermore, denote by $P_{det}$ the determinant bundle $P_{Spin^c}/SO(4)$, which is a principal $S^1$--bundle. By observing that $P_{Spin^c}$ is a double covering of $P_{SO}\times P_{det}$, we obtain that a choice of connection $a$ on $P_{det}$ together with the Levi--Civita connection on  $P_{SO}$ induces a connection on $P_{Spin^c}$.  Letting $Spin^c(4)$ act on $\H\oplus\H$ via
\[
[q_+, q_-, z]\cdot (h_1, h_2)=(q_+h_1\bar z, q_-h_2\bar z)
\]
we obtain the associated bundle still denoted by $W^+\oplus W^-$, which is again a Dirac bundle. Hence, similarly to the case of spin four-manifolds, we obtain the spin$^{\mathrm c}$-Dirac operators $\dirac_a^\pm\colon \Gamma(W^\pm)\to \Gamma(W^\mp)$.  

The third way is to view $\H\oplus\H$ as the $SO(4)$--representation $(\Lambda^2_+(\R^4)^*\oplus \R)\oplus \R^4$. This leads to the following Dirac operator
\[
\dirac=
\begin{pmatrix}
  0 & \dirac''\\
 \dirac' & 0
\end{pmatrix},
\qquad 
\begin{aligned}
  &\dirac'=\delta^+ +d\colon  \Om^2_+(X)\oplus\Om^0(X)\to \Om^1(X);\\
  &\dirac''=d^++\delta\colon \Om^1(X)\to \Om^2_+(X)\oplus\Om^0(X),
\end{aligned}
\] 
where $\delta^+$ is the restriction of $\delta=*d*$ to $\Om^2_+(X)$.

\medskip


We would like to mention briefly some applications of Dirac operators. An important property of Dirac operators is the Weitzenb\"ock formula (also known as the Bochner identity)~\cite[Thm~8.2]{LawsonMichelsohn:89}, which can be written in the form
\[
\dirac^2=\nabla^*\nabla +\mathcal R,
\]
where $\nabla^*\nabla$ is the connection Laplacian and $\mathcal R$ is an expression, which depends algebraically on the curvature tensor. For instance, in the case $D=d+\delta$ the restriction of $\mathcal R$ to $T^*X$ can be identified with the Ricci curvature. This implies in particular that for compact manifolds admitting  a metric with positive Ricci curvature the first Betti number vanishes.

For the spin-Dirac operator (not necessarily in dimension four) the curvature term $\mathcal R$ equals up to a constant to the scalar curvature. This implies that for a metric with positive scalar curvature there are no harmonic spinors. A consequence of this is that the signature of a spin  four-manifold, which admits a metric with positive scalar curvature,  vanishes (see, for instance,~\cite{Moore:96}). 

\medskip

A particular class of harmonic spinors is given by covariantly constant spinors (here, clearly,  the dimension of the base manifold does not need to be four). For  a complete simply connected irreducible manifold, the existence of a covariantly constant spinor is equivalent~\cite{Wang:89_ParallelSpinors} to the holonomy group being  one of  $SU(n)$, $Sp(n)$, $G_2$ or $Spin(7)$. A particularly interesting case for us is the last holonomy group, which can occur  on eight-manifolds only.  An eight-manifold $M$ with holonomy in $Spin(7)$ is characterized by the existence of a particular closed 4--form $\Om$ called the Cayley form. A 4-dimensional submanifold $X\subset M$ is called a Cayley-submanifold, if $\Om$ restricted to $X$ yields the volume form of the induced metric on $X$. Cayley-submanifolds are examples of calibrated submanifolds~\cite{HarveyLawson:82}, hence they are volume minimizing in their homology classes. Cayley-submanifolds, possibly singular, also arise as blow-up loci~\cite{Tian:00} of $Spin(7)$-instantons, which are discussed in some details in Section~\ref{Subsect_Spin7Instantons} below.  Finally, the space of infinitesimal deformations of Cayley-submanifolds is the space of harmonic spinors~\cite{McLean:98}.

\subsection{The Seiberg--Witten equations}

Consider the map
\[
\sigma\colon\H\to\ImH,\qquad x\mapsto xi\bar x.
\]
Putting $x=z+ jw$, this map can be written in a more common way, namely
\[
\C^2\to \mathfrak{su}(2),\qquad (z,w)\mapsto  \frac 12
\begin{pmatrix}
 |z|^2-|w|^2 & z\bar w\\
\bar z w & |w|^2- |z|^2
\end{pmatrix}.
\] 
The map $\sigma$ is $Spin^c(4)$--equivariant, if the source is regarded as the $W^+$--representation and the target as $\Lambda^2_+(\R^4)^*\cong \mathfrak{sp}_+(1)$. Choosing a spin$^{\mathrm c}$ structure $P_{Spin^c}$ on an oriented Riemannian four-manifold $X$, we obtain an induced map, still denoted by $\sigma$, between the associated fiber bundles:
\[
\sigma\colon W^+\to\Lambda^2_+T^*X.
\]
The Seiberg--Witten equations~\cite{SeibergWitten:94} are
\begin{equation}\label{Eq_SW}
  \begin{aligned}
    &\dirac_a^+\varphi=0,\\
    &F_a^+=\sigma(\varphi),
  \end{aligned}
\qquad (a,\varphi)\in\mathcal A(P_{det})\times\Gamma(W^+),
\end{equation}
where $\mathcal A(P_{det})$ is the space of all connections on $P_{det}$. The space of solutions is invariant under the action of the gauge group $\mathcal G=\{g\colon X\to S^1 \}$, which acts on $a$ by the gauge transformations and on $\varphi$ by the multiplication. 

From now on assume that $X$ is closed. Perturbing the second equation by a self-dual 2-form one can achieve that for a generic choice of such perturbation the moduli space of solutions
\[
\mathcal M_{SW}=\{(a,\varphi)\mid  (a,\varphi)\ \text{satisfies }\eqref{Eq_SW} \}/\mathcal G
\]
is a smooth oriented\footnote{This requires a choice of orientation on certain homology groups of $X$.} compact manifold of dimension $d=(c_1(P_{det})^2 - 2\chi(X)-3\sign (X))/4$, where $\chi$ and $\sign$ denote the Euler characteristic and the signature respectively.   

Choose a basepoint $x_0\in X$ and denote $\mathcal G_0=\{g\in\mathcal G\mid g(x_0)=1 \}$. Then the space $\{(a,\varphi)\ \text{satisfies }\eqref{Eq_SW}  \}/\mathcal G_0$ is a principal $S^1$--bundle over $\mathcal M_{SW}$. Let $\eta$ denote the first Chern class of this bundle. Then, for a given spin$^{\mathrm c}$ structure the Seiberg--Witten invariant is the integer $\langle \eta^{\frac d2},[\mathcal M_{SW}]\rangle$ provided $d$ is even and $0$ otherwise. This integer does not depend on the choice of the perturbation provided $b_2^+(X)>1$. Thus, the Seiberg--Witten invariant is an integer-valued function on the space of all  spin$^{\mathrm c}$ structures. Observe also that this space is an $H^2(X;\Z)$--torseur.

A reader who wishes to learn more about basics of Seiberg--Witten theory is encouraged to consult~\cite{Moore:96,Morgan:96,Marcolli:99, Nicolaescu:00}.
\medskip

Let us give some sample applications of the Seiberg--Witten theory. The Seiberg--Witten invariant does not vanish on symplectic four-manifolds with $b_2^+>1$.  Moreover for such manifolds the Seiberg--Witten invariant coincides with a variant of the Gromov--Witten invariant~\cite{Taubes00_SWGW}.  Hence, there are strong restrictions on the smooth type of four-manifolds admitting symplectic structures. 

The Seiberg--Witten invariant vanishes on connected sums of four-manifolds with $b_2^+>1$. There is however a refinement of the Seiberg--Witten invariant~\cite{BauerFuruta04_StableCohomRefinement} not necessarily vanishing on connected sums. This refinement is based on the Seiberg--Witten \emph{map} rather than on its zeros only. The Seiberg--Witten map was also used to prove~\cite{Furuta01_118Conj} the ``10/8--theorem'', which is a strong restriction on the intersection form of a smooth four-manifold. 

Seiberg--Witten theory also applies to the Riemannian geometry of four-manifolds. The Weitzenb\"ock formula can be used to show that on Riemannian four-manifolds with positive scalar curvature the Seiberg--Witten invariant vanishes~\cite{Witten:94}. Using Seiberg--Witten theory, LeBrun~\cite{LeBrun96_4MfldsWithoutEinsteinM} showed that on four-manifolds there are obstructions to the existence of Einstein metrics besides the Hitchin--Thorpe inequality.


With the help of the three-dimensional variant of the Seiberg--Witten equations, Taubes proved~\cite{Taubes07_WeinsteinConj}  the Weinstein conjecture for three-manifolds. This states that for any closed three-manifold equipped with a contact form the associated Reeb vector field has a closed orbit.

\section{Generalized Dirac operators}\label{Sect_genDirac}

\subsection{Generalized Dirac operators on four--manifolds}

Let $(U,i_1, i_2, i_3)$ be a quaternionic vector space, where $i_1, i_2,$ and $i_3$ are complex structures satisfying quaternionic relations. Let $(V, I_1, I_2, I_3)$ be another  quaternionic vector space. Considering  $(U, i_1)$ and $(V, I_1)$ as complex vector spaces, we can decompose the space $\hom\R UV$ into two components consisting of complex linear and complex antilinear maps. Bringing $i_2$ and $I_2$ into the consideration, we obtain that each component splits in turn into two subspaces (the remaining complex structures do not refine this splitting, since they are determined by the first two). This is shown schematically on the diagram: 
\vskip0.3cm
\[
\begin{gathered}
    \begin{CD}
 \begin{tabular}{| c |}\hline
    \begin{minipage}[c][3cm]{3.4cm}
      \begin{center} $ \hom\R UV$\end{center} 
    \end{minipage} \tabularnewline\hline
 \end{tabular} @>>> 
 \begin{tabular}{| c |}\hline
\begin{minipage}[c][1.5cm]{3.4cm}
\begin{center}$\{Ai_1=\phantom{-}I_1A \}$\end{center}
\end{minipage}\tabularnewline  \hline
\begin{minipage}[c][1.5cm]{3.4cm}
  \begin{center} $\{Ai_1=-I_1A \}$ \end{center}
\end{minipage}\tabularnewline \hline
 \end{tabular} \\
@VVV @VVV\\
   \begin{tabular}{| c | c |}\hline
  \begin{minipage}[c][3cm]{1.5cm}
 \begin{center}\rotatebox{270}{ $\{Ai_2=\phantom{-}I_2A \}$}\end{center} 
\end{minipage} &
\begin{minipage}[c][3cm]{1.5cm}
 \begin{center} \rotatebox{270} {$\{Ai_2=-I_2A \}$} \end{center} 
\end{minipage} \tabularnewline \hline
\end{tabular}
@>>> 
\begin{tabular}{| c | c |}\hline
\begin{minipage}[c][1.5cm]{1.5cm}\begin{center}{ $B_-$}\end{center} \end{minipage}&  
\begin{minipage}[c][1.5cm]{1.5cm}\begin{center}{ $B_1$}\end{center} \end{minipage} \tabularnewline \hline
\begin{minipage}[c][1.5cm]{1.5cm}\begin{center}{ $B_2$}\end{center} \end{minipage} & 
\begin{minipage}[c][1.5cm]{1.5cm}\begin{center}{ $B_3$}\end{center} \end{minipage}   \tabularnewline \hline
\end{tabular}
    \end{CD}
\end{gathered}
\]\\
Here $B_-=\hom{\mathbb H} UV$ and $B_j=\{ A\mid Ai_j=I_jA,\ Ai_k=-I_kA\text{ for}\  k\neq j\}$.

Notice that the group $SO(3)$ acts on the space of quaternionic structures of any quaternionic vector space. Hence there is an induced action of $SO(3)\times SO(3)$ on $\hom \R UV$. Consider the action of the diagonally embedded $SO(3)$. Then $B_-$ is the trivial representation. Though each individual subspace $B_j,\; j=1,2,3,$ is not preserved by this action, their direct sum $B_1\oplus B_2\oplus B_3=B_+$ is. To summarize, we have a splitting 
\begin{equation}\label{Eq_SplittingOfHomR}
\hom \R UV=B_-\oplus B_+=\hom{\mathbb H} UV\oplus B_+,
\end{equation}
which is invariant with respect to a simultaneous rotation of complex structures on both $U$ and $V$. 
\begin{rem}\label{Rem_CliffMultAndHomH}
It is easy to check that the map
\begin{equation}
A\mapsto \frac 14 \bigl (A-I_1Ai_1- I_2Ai_2-I_2Ai_3\bigr )
\end{equation}
is in fact the projection onto $\hom{\mathbb H} UV$. In particular, for $U=\H$ this projection 
can be written equivalently as 
\[
\hom R{\R^4}V\to V\cong \hom H{\H}V,\qquad A\mapsto  \frac 14 (Ae_0 -I_1Ae_1-I_2Ae_2 -I_3Ae_3). 
\]
This in turn can be identified with the map $\H\otimes_{\R}V\to V,\ h\otimes v\mapsto \bar h\cdot v$ (cf.~\eqref{Eq_ClInDim4}).
\end{rem}

 \medskip

Let $\tilde G$ be a Lie group together with a homomorphism $\Z/2\Z\to Z(\tilde G)$. Denote 
\[
G=\tilde G/\pm 1\quad\text{and} \quad \hat G=\bigl (Sp(1)\times\tilde G\bigr )/\pm 1.
\]
Clearly, there is a surjective Lie group homomorphism
\[
(\rho_1, \rho_2)\colon \hat G\longrightarrow SO(3)\times G
\]  
with a finite kernel.

Let $M$ be a hyperK{\"a}hler\footnote{The metric structure of $M$ is non-essential for the purposes of this section but will play a role below.} manifold. In particular, $M$ comes equipped with a triple $(I_1, I_2, I_3)$ of complex structures satisfying the quaternionic relations. Then for any purely imaginary quaternion $a=a_1i+a_2j+a_3k$ of unit length denote $I_a=a_1I_1 +a_2I_2+ a_3I_3$. We assume that $\hat G$ acts on $M$ such that the following conditions hold:
\begin{itemize}
\item[(i)] $(L_{\hat g})_* I_a (L_{\hat g^{-1}})_*= I_{\rho_1(\hat g)a}$, where $L_{\hat g}\colon M\to M,\ m\mapsto \hat g\cdot m$;
\item[(ii)] $\tilde G$  preserves the \hK structure of $M$, where $\tilde G$ is viewed as a subgroup of $\hat G$. 
\end{itemize}
In the sequel, $M$ is called the target manifold.

\medskip

Let $X^4$ be an oriented Riemannian four-manifold. 
 Denote by $P_\pm\xrightarrow{\; \pi_\pm\; }X$ the principal $SO(3)$--bundle of oriented orthonormal frames of $\Lambda_\pm^2T^*X$.  For any $x\in X$ a quaternionic structure $(i_1, i_2, i_3)$ on $T_xX$ compatible with the scalar product and the orientation gives rise to a frame $(\om_1^+, \om_2^+, \om_3^+)$ of $\Lambda^2_+T_x^*X$, where $\om_j^+=g_x(i_j\cdot, \cdot)$. This correspondence allows us to interpret a point $p_+\in P_+$ as a quaternionic structure on $T_xX$. 


Let $\hat P$ be a principal $\hat G$--bundle. Then $\hat P/ \tilde G$ is a principal $SO(3)$--bundle. We assume that $\hat P/\tilde G$ is isomorphic to $P_+$ and fix an isomorphism throughout. Similarly, $\hat P/Sp(1)=P_G$ is a principal $G$--bundle.  Moreover,  $\hat P$ is a finite covering of $P_+\times P_G$. Hence, a connection $a$ on $P_G$ together with the Levi--Civita connection on $P_+$ determines a connection $\hat a$ on $\hat P$.

\medskip 

Let $\bM=\hat P\times_{\hat G}M\xrightarrow{\ \pi\ }X$ be the associated bundle. Denote by $\mathcal V\rightarrow \bM$ the vertical tangent bundle, i.e., $\mathcal V=\ker \pi_*$. For any section $u$ of $\mathbb M$ the covariant derivative $\nabla^au$ is a section of $\hom R{TX}{u^*\mathcal V}$. A consequence of Property (i) above is that for each $x\in X$ the vector space $\mathcal V_{u(x)}$ has a distinguished $SO(3)$--worth of quaternionic structures $\mathcal I_{u(x)}$, which can be canonically identified with $P_{+,x}$. Recalling that splitting~\eqref{Eq_SplittingOfHomR} is $SO(3)$-invariant, we obtain
\[
\hom R{TX}{u^*\mathcal V}=\mathit{Hom}_{\H} ({TX}, {u^*\mathcal V})\oplus \mathit{Hom}^+_{\mathbb R} ({TX},{u^*\mathcal V}^+).
\]
\begin{defn}
  We call the map $\dirac_a\colon u\mapsto -4(\nabla^a u)_{\H}$ the generalized Dirac operator of $\bM$, where $(\nabla^a u)_{\H}$ is the $\H$-linear component of $\nabla^a u$. 
\end{defn}
\begin{ex}
Let $P_{Spin^c}\to X$ be a spin$^{\mathrm c}$ structure. Recall that there is a short exact sequence
\[
\{ 1\}\to Sp_-(1)\to Spin^c(4)\to \bigl ( Sp_+(1)\times S^1  \bigr )/\pm 1\to \{ 1\}.
\] 
Hence, $\hat P=P_{Spin^c}/Sp_-(1)$ is a principal $\hat G$--bundle, where $\hat G= \bigl ( Sp_+(1)\times S^1  \bigr )/\pm 1\cong U(2)$.  Furthermore, put $G=S^1$ and $M=\H$. Here $\H$ is viewed as being equipped with its left quaternionic structure and the $\hat G$--action is given by $[q_+, z]\cdot h= q_+h\bar z$. It follows from Remark~\ref{Rem_CliffMultAndHomH} that for these choices the ``generalized'' Dirac operator equals the spin$^{\mathrm c}$-Dirac operator $\dirac_a^+$, where $a$ is a connection on the $S^1$--bundle $\hat P/Sp_+(1)=P_{det}$.
\end{ex}

Suitably modifying  this example,  one can also obtain the spin-Dirac operator $\dirac^+$ and $\delta^+ +d$. Details are left to the reader.        

\begin{rem}
  In the case $X=\R^4,\ G=\{ 1\}$, and $M=\H$ the equation $\dirac u=0$ coincides with~\eqref{Eq_DiracR4}. This equation was  studied by Fueter~\cite{ Fueter:34} in his attempts to construct a quaternionic version of the theory of holomorphic functions. Therefore, sometimes generalized harmonic spinors are also called Fueter-sections.
\end{rem}



\subsection{Generalized Dirac operators on K{\"a}hler surfaces}\label{Subsect_DiracOnKaehler}
In this subsection a special case of the construction presented in the preceding  subsection is studied. First, only the case  $\hat G=SO(3)$  is considered here. Secondly, $X$ is assumed to be a K\"ahler surface. 

Notice that the above assumptions imply in particular that $\hat P=P_+$. Moreover, for a K\"ahler surface the structure group of $P_+$ reduces to $S^1$. Concretely, one can think of $S^1\subset SO(3)$ as a stabilizer of a non--zero vector, say $(1,0,0)\in\R^3$. Let $P_{red}\subset P_+$ denote the corresponding $S^1$--subbundle. With our choices $P_{red}$ is the principal bundle of the canonical bundle $K_X$. Then 
\begin{equation}
  \label{Eq_bMredStrGp}
  \bM= P_{red}\times_{S^1}M.
\end{equation}
The induced $S^1$--action on $M$ fixes $I_1$ and  rotates the other two complex structures.  This implies that the vertical bundle $\mathcal V$ carries a distinguished complex structure, which by a slight abuse of notation is also denoted by $I_1$. Then for $u\in\Gamma(\bM)$ we denote by $\partial u\in \Om^{1,0}(X;u^*\mathcal V)$ the $(1,0)$--component of the covariant derivative.   

The proof of the next theorem is  adapted from the proof of Proposition~4 in~\cite{Haydys_ahol:08}.

\begin{thm}
  Let $X$ be a compact K{\"a}hler surface. Then a spinor $u\in\Gamma(\bM)$ is harmonic, i.e., $\dirac u=0$, if and only if $\partial u=0$. 
\end{thm}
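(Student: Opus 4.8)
The plan is to compute the generalized Dirac operator $\dirac u$ pointwise in terms of the covariant derivative $\nabla^a u$, split $\nabla^a u$ into its $(1,0)$-- and $(0,1)$--components with respect to the distinguished complex structure $I_1$ on the vertical bundle, and then use the harmonicity condition together with an integration-by-parts (Weitzenb\"ock-type) argument to show that $\partial u=0$ is equivalent to $\dirac u=0$. First I would set up local coordinates adapted to the K\"ahler structure: pick a point $x\in X$, a local unitary coframe so that $\om_1^+$ corresponds to the K\"ahler form and $\om_2^+,\om_3^+$ to the real and imaginary parts of a $(2,0)$--form. Writing the quaternionic structure $(i_1,i_2,i_3)$ on $T_xX$ accordingly, the formula from Remark~\ref{Rem_CliffMultAndHomH} gives
\[
(\nabla^a u)_{\H}=\tfrac14\bigl(\nabla^a u - I_1(\nabla^a u)i_1 - I_2(\nabla^a u)i_2 - I_3(\nabla^a u)i_3\bigr),
\]
so $\dirac u= -4(\nabla^a u)_{\H}$ becomes an explicit combination of $\nabla^a_{e_j}u$. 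The key computation is to recognize that, after grouping the $e_0,e_1$ terms and the $e_2,e_3$ terms, $\dirac u$ is expressible through $\bar\partial u$ (the $(0,1)$--part) alone, or equivalently that the $\H$-antilinear part of $\nabla^a u$ is precisely $\partial u$ up to the canonical identifications; this is where the K\"ahler hypothesis enters, since it forces the structure group reduction \eqref{Eq_bMredStrGp} and makes $I_1$ parallel along the fibers.

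Granting that pointwise identity, one direction is immediate: if $\partial u=0$ then the $\H$-linear component of $\nabla^a u$ vanishes, hence $\dirac u=0$. The converse is the substantive part. Here I would argue as follows: the condition $\dirac u=0$ says $(\nabla^a u)_{\H}=0$, i.e. $\nabla^a u$ is $\H$-antilinear; I want to upgrade this to the stronger statement that its $(1,0)$--part $\partial u$ vanishes identically. The idea is to consider the energy-type functional $\int_X|\partial u|^2$ and show, using the compactness of $X$, the Stokes theorem, and the fact that $X$ is K\"ahler (so $d\om=0$), that $\int_X|\partial u|^2 = \int_X|\bar\partial u|^2 + (\text{topological/curvature terms})$, analogously to the classical identity relating the Dolbeault Laplacians. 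Combined with the pointwise relation between $\dirac u$ and $\bar\partial u$, and the Weitzenb\"ock formula $\dirac^2=\nabla^*\nabla+\mathcal R$ alluded to in the text, one concludes that $\dirac u=0$ forces $\partial u=0$. Alternatively, and perhaps more cleanly, one can appeal to the general fact that on a K\"ahler manifold the two halves $B_+$ and $B_-$ of \eqref{Eq_SplittingOfHomR} correspond exactly to $\partial$ and $\bar\partial$ up to conjugation, so that the vanishing of one $\H$-component already pins down the other when the section is, say, pluriharmonic; then harmonicity of $u$ (from $\dirac u=0$ and self-adjointness) closes the loop.

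I expect the main obstacle to be the converse direction, specifically justifying rigorously that $(\nabla^a u)_{\H}=0$ implies $\partial u=0$ rather than merely $\bar\partial$-type information. The subtlety is that $\bM$ is a nonlinear fiber bundle with fibers the hyperk\"ahler manifold $M$, so one cannot simply invoke linear Hodge theory; one must work with $u^*\mathcal V$ and the pulled-back connection, track how the K\"ahler identities survive this pullback, and handle the curvature of $\bM$ carefully. I would manage this by following the strategy of Proposition~4 in~\cite{Haydys_ahol:08}: localize, trivialize $u^*\mathcal V$ compatibly with $I_1$, reduce the identity to a computation on sections of a genuine vector bundle, and then globalize via a partition of unity and the compactness of $X$ to run the integration-by-parts argument. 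The curvature term $\mathcal R$ will need to be shown to be nonnegative (or to integrate to something of a definite sign) on the relevant component, which is exactly the place where the K\"ahler condition — not merely the hypercomplex structure on $M$ — is indispensable.
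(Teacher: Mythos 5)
Your proposal contains two genuine gaps. First, the pointwise identity you hope for --- that $\dirac u$ ``is expressible through $\bar\partial u$ alone'', equivalently that the $\H$-linear part of $\nabla^a u$ ``is precisely $\partial u$ up to the canonical identifications'' --- is false. In the notation of \eqref{Eq_SplittingOfHomR}, $\partial u$ is the full $(1,0)$-component of $\nabla^a u$, i.e.\ the projection onto $B_-\oplus B_1=\{Ai_1=I_1A\}$, whereas $\dirac u$ is (up to the factor $-4$) only the projection onto the smaller subspace $B_-=\mathit{Hom}_{\H}(TX,u^*\mathcal V)$. Thus $\dirac u=0$ kills the $B_-$-piece of $\partial u$ but says nothing pointwise about the $B_1$-piece; if your identification held, the theorem would be a pointwise triviality requiring neither compactness nor the K\"ahler hypothesis. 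The entire content of the converse direction is that the $B_1$-component of $\nabla^a u$ must also vanish, and this genuinely needs a global input.

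Second, the global input you propose --- an energy identity plus the Weitzenb\"ock formula, concluding via nonnegativity of the curvature term --- would fail, because the curvature term in \eqref{Eq_WeitzOnKahler} is $\int_X\rho_0(u)\frac s4\,vol_X$ with $s$ the scalar curvature of $X$, which has no sign on a general compact K\"ahler surface; demanding $\mathcal R\ge 0$ would be an extra hypothesis absent from the statement. The paper's mechanism uses no sign condition at all: the reduction \eqref{Eq_bMredStrGp} gives a fiberwise $S^1$-action on $\bM$, hence on $\Gamma(\bM)$, fixing $I_1$ and rotating $I_2,I_3$; the right-hand side of \eqref{Eq_WeitzOnKahler} is $S^1$-\emph{invariant} (this is where compactness and the vanishing of the self-dual Weyl tensor on K\"ahler surfaces enter), so $z\cdot u$ is harmonic whenever $u$ is. Writing the local harmonicity equation \eqref{Eq_HarmOfuLoc} for $u$ and for $z\cdot u$ with $z$ chosen so that conjugation sends $I_2\mapsto -I_2$ and $I_3\mapsto -I_3$, and adding the two equations, the $I_2,I_3$ terms cancel and one is left with $\nabla_{v}u_{loc}-I_1\nabla_{i_1v}u_{loc}=0$, i.e.\ $\partial u=0$, by pure algebra. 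You correctly isolated the converse as the substantive direction, but the missing idea is this symmetry argument, not an integration-by-parts estimate.
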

\begin{proof} First observe that the commutativity of $S^1$ together with \eqref{Eq_bMredStrGp} imply that $\bM$ carries a fiberwise action of $S^1$, which in turn induces an $S^1$--action on $\Gamma(\bM)$. 

For any $u\in\Gamma(\bM)$ we have the Weitzenb{\"o}ck--type formula~\cite{Pidstrygach:04}
\begin{equation}\label{Eq_WeitzOnKahler}
\| \dirac u\|^2 = \| \nabla u \|^2 +\int\limits_X\rho_0(u)\frac s4 \, vol_X,
\end{equation}
where $s$ is the scalar curvature of $X$ and $\rho_0$ is an $S^1$--invariant function. Here we used the fact that the self--dual part of the Weyl tensor vanishes on K{\"a}hler surfaces.  Since the right hand side of~\eqref{Eq_WeitzOnKahler} is $S^1$--invariant, for any harmonic spinor $u$ and any $z\in S^1$ the spinor $z\!\cdot\! u$ is also harmonic.

Pick a point $x\in X$ and choose a local trivialization of $P_{red}$ on a neighbourhood $W$ of $x$. This trivialization gives rise to an almost quaternionic structure $(i_1, i_2, i_3)$ on $W$, where  $i_1$ is in fact  the globally defined  complex structure of $X$. Let $u_{loc}\colon W\to M$ be the local representation of $u$. Then by Remark~\ref{Rem_CliffMultAndHomH} the harmonicity of $u$  yields:
\begin{equation}\label{Eq_HarmOfuLoc}
  \nabla_{ v}u_{loc}- I_1\nabla_{i_1v}u_{loc} - I_2\nabla_{i_2 v}u_{loc}- I_3\nabla_{i_3 v}u_{loc}=0.
\end{equation}
Here $v$ is an arbitrary local vector field on $W$. Substituting $z\!\cdot\! u_{loc}=L_zu_{loc}$ instead of $u_{loc}$ in~\eqref{Eq_HarmOfuLoc}, one obtains after a transformation:
\begin{equation}\label{Eq_HarmOfzuLoc}
\nabla_{ v}u_{loc}- (L_{\bar z})_*I_1(L_z)_*\nabla_{i_1v}u_{loc} - (L_{\bar z})_*I_2(L_z)_*\nabla_{i_2 v}u_{loc}- (L_{\bar z})_*I_3(L_z)_*\nabla_{i_3 v}u_{loc}=0.
\end{equation}

Recall that the $S^1$--action preserves $I_1$ and rotates the other two complex structures. Hence,  $(L_{\bar z})_*I_1(L_z)_*=I_1$ for any $z\in S^1$ and there exists some $z\in S^1$ such that  $(L_{\bar z})_*I_2(L_z)_*=-I_2$ and $(L_{\bar z})_*I_3(L_z)_*=-I_3$. Then~\eqref{Eq_HarmOfzuLoc} yields
\[
 \nabla_{ v}u_{loc}- I_1\nabla_{i_1v}u_{loc} +I_2\nabla_{i_2 v}u_{loc}+ I_3\nabla_{i_3 v}u_{loc}=0.
\]
Summing this with~\eqref{Eq_HarmOfuLoc} leads to 
\begin{equation}\label{Eq_DelLoc}
\nabla_{ v}u_{loc}- I_1\nabla_{i_1v}u_{loc} =0\quad \Longleftrightarrow\quad \nabla_{i_1v}u_{loc}=-I_1\nabla_{ v}u_{loc}, 
\end{equation}
which means $\partial u=0$. On the other hand, it is easy to see that~\eqref{Eq_DelLoc} implies~\eqref{Eq_HarmOfuLoc}. This finishes the proof.
\end{proof}

\paragraph{Holomorphic sections of bundles with fiber $T^*Gr_k(\C^n)$.}

Let $(M_1, J_1, J_2, J_3)$ be a \hK manifold equipped with an action of $S^1$, which fixes one complex structure, say $J_1$, and rotates the other two complex structures.\footnote{This action does not need to extend to an action of $SO(3)$ as above.} Assume also that there is an $S^1$--equivariant  $(J_1, I_1)$--anti\-ho\-lomorphic map $\tau\colon M_1\to M$. Then given a holomorphic section $u_1$ of $\bM_1=\hat P_{red}\times_{S^1}M_1$ we obtain a harmonic spinor $u\in\Gamma(\bM)$ by composing $u_1$ with the map $\bM_1\to\bM,\ [\hat p, m_1]\mapsto [\hat p,\tau (m_1)]$.  An example of this will be given below.

A large class of \hK manifolds $M_1$ admitting $S^1$--action as described above was constructed in~\cite{Kaledin:99,Feix:01}, where $M_1$ is the cotangent bundle of a K{\"a}hler manifold $Z$ (usually the \hK metric is defined only in some neighbourhood of the zero section). It is assumed in this case that $z\in S^1$ acts on $T^*Z$ by the multiplication by $z^p$ for some $p\in\mathbb Z$. 

Identify $u_1\in\Gamma(\bM_1)$ with an equivariant map $\hat u_1\colon  P_{red}\to M_1$. In the case $M_1=T^*Z$ the map $\hat u_1$ can be composed with the projection $T^*Z\to Z$. The result is an $S^1$--invariant map $P_{red}\to Z$ or, equivalently, a map $v\colon X\to Z$. Writing $T^*Z\cong T^*Z\otimes_{\C}\C$ and letting $S^1$ act on $\C$ only, we see that the lift of $v$ is given by a section $\psi$ of $v^*T^*Z\otimes K_X^p$.  Moreover, holomorphicity of $u$ is equivalent to the holomorphicity of both $v$ and $\psi$. Thus, in the case $M_1=T^*Z$ we have 
\[
\{u_1\in\Gamma(\bM_1)\mid \bar\partial u_1=0\}\cong \{ (v,\psi)\mid v\in Map(X,Z), \psi\in \Gamma (v^*T^*Z\otimes K_X^p),\ \bar\partial v=0,\ \bar\partial\psi=0\}.
\]

Let us consider the case $Z=Gr_n(\C^r)$ in some details. Recall that for a compact complex manifold $X$ any holomorphic map $v\colon X\to Gr_n(\C^r)$ arises from an $r$--dimensional subspace $V\subset H^0(X; E)$ for some rank $n$ holomorphic vector bundle $E\to X$ that is generated by holomorphic sections from $V$. Moreover, if $S\to Gr_n(\C^r)$ denotes the tautological vector bundle, then $E\cong v^*S$ and there is an embedding $E\hookrightarrow \underline\C^r$. Furthermore, $F=\underline\C^r/E$ is the pull-back of the canonical  factor bundle $Q$ on $Gr_n(\C^r)$. Since $T^*Gr_n(\C^r)\cong Q^\vee\otimes S$, it follows that $v^*T^*Gr_n(\C^r)\cong F^\vee\otimes E$. Hence we obtain the following result.

\begin{prop}\label{Prop_HolSectionsT*Gr}
  For a compact K\"ahler surface  $X$, any holomorphic section of $P_{red}\times_{S^1}T^*Gr_n(\C^r)\to X$ can be constructed from the following data:
  \begin{itemize}
  \item A holomorphic rank $n$ vector bundle $E$ admitting $r$ global holomorphic sections that generate $E$;
  \item A holomorphic section of $(\underline\C^r/E)^\vee\otimes E\otimes K_X^p$.\qed
  \end{itemize}
\end{prop}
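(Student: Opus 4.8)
The plan is to read the proposition off the correspondence set up in the paragraphs immediately above, specialised to $Z=Gr_n(\C^r)$ and $M_1=T^*Gr_n(\C^r)$, together with the classical dictionary between holomorphic maps into a Grassmannian and globally generated holomorphic vector bundles. Nothing new has to be invented; the work is to chain the pieces together in order. First I would invoke the correspondence: since here $P_{red}$ is the principal bundle of $K_X$ and $S^1$ acts on the fibre $T^*Gr_n(\C^r)$ by $z\mapsto z^p$, it applies verbatim and identifies a section $u_1$ of $P_{red}\times_{S^1}T^*Gr_n(\C^r)$ with a pair $(v,\psi)$, where $v\colon X\to Gr_n(\C^r)$ is the $S^1$-invariant map obtained from the equivariant lift $\hat u_1\colon P_{red}\to T^*Gr_n(\C^r)$ by composing it with the bundle projection $T^*Gr_n(\C^r)\to Gr_n(\C^r)$, and $\psi\in\Gamma(v^*T^*Gr_n(\C^r)\otimes K_X^p)$ is the remaining ``cotangent'' component (the twist by $K_X^p$ being forced by the $S^1$-weight $p$); moreover $u_1$ is holomorphic precisely when $v$ and $\psi$ are both holomorphic.

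It then remains to unwind the two holomorphicity conditions. For $v$: on a compact complex manifold a holomorphic map $v\colon X\to Gr_n(\C^r)$ is the same datum as a rank-$n$ holomorphic vector bundle $E$ admitting $r$ global holomorphic sections generating it, the two directions being evaluation of those sections, resp.\ pull-back of the universal exact sequence $0\to S\to\underline\C^r\to Q\to0$; with this $E\cong v^*S$ and, writing $F=\underline\C^r/E$, one has $F\cong v^*Q$. For $\psi$: from $TGr_n(\C^r)\cong\mathrm{Hom}(S,Q)$, hence $T^*Gr_n(\C^r)\cong Q^\vee\otimes S$, pulling back yields $v^*T^*Gr_n(\C^r)\cong F^\vee\otimes E=(\underline\C^r/E)^\vee\otimes E$, so that $\psi$ is exactly a holomorphic section of $(\underline\C^r/E)^\vee\otimes E\otimes K_X^p$. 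Putting these together shows that every holomorphic section of $P_{red}\times_{S^1}T^*Gr_n(\C^r)$ gives rise to the data in the statement; running the same equivalences backwards shows that any such $(E,\psi)$ arises from a unique holomorphic section, so the construction is in fact a bijection.

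This proposition has no genuinely hard step; it is a repackaging of the discussion preceding it. The points needing care are the bookkeeping conventions for the Grassmannian --- which of its two tautological bundles is called $S$, and whether ``globally generated by $r$ sections'' is recorded as a surjection $\underline\C^r\twoheadrightarrow E$ or as the dual inclusion --- and, if one wants the argument to be self-contained rather than quoting the earlier discussion, reverifying that holomorphicity of $u_1$ really decouples into that of $v$ and of $\psi$ separately (this is where the cotangent rather than the tangent bundle enters). One should also keep in mind that the hyperK\"ahler metric on $T^*Gr_n(\C^r)$ is in general defined only near the zero section, so strictly speaking one restricts to sections taking values in that neighbourhood; this does not affect the holomorphic classification.
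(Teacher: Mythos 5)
Your proposal follows the paper's own route exactly: the proposition is stated with \qed because it is precisely the concatenation of the preceding identification of holomorphic sections of $\bM_1=P_{red}\times_{S^1}T^*Z$ with holomorphic pairs $(v,\psi)$, the classical correspondence between holomorphic maps $X\to Gr_n(\C^r)$ and rank-$n$ bundles $E$ globally generated by $r$ sections, and the isomorphism $v^*T^*Gr_n(\C^r)\cong(\underline\C^r/E)^\vee\otimes E$ coming from $T^*Gr_n(\C^r)\cong Q^\vee\otimes S$. Your added remarks on the sub/quotient convention for the tautological bundle and on the hyperK\"ahler metric being defined only near the zero section are sensible cautions but do not change the argument.
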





\paragraph{Generalized harmonic spinors with values in the space of anti-self-dual instantons.} Denote by  $M_{n,r}(\mathbb K)$ the space  of matrices with $n$ rows and $r$ columns with entries from a ring $\mathbb K\in\{\R,\C,\H\}$. Consider the flat \hK manifold 
\[
N=M_{n,n}(\H)\oplus M_{n,r}(\H)\cong 
M_{n,n}(\C)\oplus M_{n,n}(\C)\oplus M_{n,r}(\C)\oplus M_{r,n}(\C).
\]
The group $U(n)$ acts on $N$ as follows: $(B_1, B_2, C, D)\cdot g= (g^{-1}B_1,\, g^{-1}B_2,\, g^{-1}C,\, Dg)$. The corresponding moment map $\mu\colon N\to \mathfrak{u}(n)\otimes\ImH,\ \mu=\mu_\R i+\mu_\C j$, is given by
\[
\mu_\R=\frac i2\bigl ( [B_1, \bar B_1^t]+ [B_2, \bar B_2^t] + C\bar C^t -\bar D^t D\bigr ),\qquad
\mu_\C = [B_1, B_2] +CD.
\]
Denote 
\[
\mathcal M_0(r,n)=\bigl \{  (B_1, B_2, C, D)\in N\mid \mu(B_1, B_2, C, D)=0  \bigr\}/U(n).
\]
This space is called the \hK reduction of $N$ and carries itself a \hK structure outside the singular locus~\cite{HitchinAO:87}. Moreover, by the ADHM construction~\cite{ADHM:78} there is a bijection between the non-singular part of $\mathcal M_0(r, n)$ and the  moduli space of framed asd connections on a Hermitian bundle $E\to S^4$ of rank $r$ and second Chern class $n$. Also, the following result holds.

\begin{thm}[{\cite[Cor.\, 3.4.10]{DonaldsonKronheimer:90}}]
  There is a bijection between $\mathcal M_0(r,n)$ and the moduli space of framed \emph{ideal} instantons on $\R^4$.
\end{thm}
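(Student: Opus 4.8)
The plan is to reduce the statement to the ADHM correspondence quoted above, which identifies genuine framed instantons of charge $n$ with $U(n)$-orbits of \emph{stable and costable} solutions of $\mu=0$ --- stable meaning no proper subspace of $\C^n$ invariant under $B_1$ and $B_2$ contains $C(\C^r)$, costable meaning no nonzero subspace invariant under $B_1$ and $B_2$ is contained in $\ker D$. What remains is to match the degenerate orbits with ``ideal'' data, so that $\mathcal M_0(r,n)$ becomes $\bigsqcup_{m=0}^{n}\mathcal M^{\mathrm{reg}}(r,n-m)\times\mathrm{Sym}^m(\R^4)$, where $\mathcal M^{\mathrm{reg}}(r,k)$ is the moduli space of genuine framed instantons of charge $k$ and $\mathcal M^{\mathrm{reg}}(r,0)$ is a point. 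I would produce the bijection from a canonical block decomposition of an ADHM solution into a genuine block and a ``point'' block.

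Concretely, given $(B_1,B_2,C,D)$ with $\mu(B_1,B_2,C,D)=0$, let $S\subseteq\C^n$ be the maximal subspace invariant under $B_1$ and $B_2$ and contained in $\ker D$; it exists because such subspaces are closed under sums, and it is carried equivariantly by the $U(n)$-action, hence descends to $\mathcal M_0(r,n)$. The key input is a positivity computation of Weitzenb\"ock type: restricting $\mu_\R=0$ to $S$, pairing with a vector $v\in S$, and summing the resulting identity $\sum_i(\|B_i^*v\|^2-\|B_iv\|^2)+\|C^*v\|^2=0$ over an orthonormal basis of $S$ forces $S$ to be invariant under $B_1^*$ and $B_2^*$ as well and orthogonal to $C(\C^r)$; the same computation, applied to the orthogonal complement of a stability-violating subspace, shows in addition that stability and costability are equivalent once $\mu=0$ holds. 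Hence $B_1$ and $B_2$ are block diagonal for $\C^n=S^\perp\oplus S$, $C$ takes values in $S^\perp$, and $D$ vanishes on $S$. On $S^\perp$ the restricted data again solves the ADHM equations and, by maximality of $S$, is costable and hence stable, so it represents a point of $\mathcal M^{\mathrm{reg}}(r,n-m)$ with $m=\dim_\C S$; on $S$ the data reduces to the pair $(B_1|_S,B_2|_S)$ with $[B_1|_S,B_2|_S]=0$ and $[B_1|_S,B_1^*|_S]+[B_2|_S,B_2^*|_S]=0$, so by a standard lemma $B_1|_S$ and $B_2|_S$ are simultaneously unitarily diagonalizable and their joint spectrum is a well-defined point of $\mathrm{Sym}^m(\C^2)=\mathrm{Sym}^m(\R^4)$.

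For the inverse, given stable and costable data on $\C^{n-m}$ together with a point of $\mathrm{Sym}^m(\R^4)$, I would take commuting normal $m\times m$ matrices with that joint spectrum, set the accompanying $C$ and $D$ to zero, and form the block sum; this solves $\mu=0$, is well defined modulo $U(n)$, and is returned to its data by the splitting above, so the two constructions are mutually inverse, the remaining checks being routine. The part I expect to be the real content --- beyond the cited genuine ADHM theorem --- is twofold. First, the positivity lemma and the resulting equivalence of stability and costability: short but essential, and the one nontrivial use of the ADHM equations here. Second, and more delicate, the identification of the $\mathrm{Sym}^m(\R^4)$-factor extracted from $S$ with the bubbling divisor of the corresponding ideal instanton in the analytic (Uhlenbeck) sense; this is what links the linear-algebra decomposition to gauge theory, and it is handled via continuity of the ADHM map on the top stratum together with a gluing argument --- indeed it is exactly this compatibility that the notion of an ideal instanton is designed to encode, as carried out in \cite{DonaldsonKronheimer:90}.
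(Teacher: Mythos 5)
The paper offers no proof of this statement---it is quoted verbatim from Donaldson--Kronheimer, Cor.~3.4.10---and your outline is essentially the argument given in that reference: the canonical maximal $(B_1,B_2)$-invariant subspace $S\subseteq\ker D$, the trace/positivity computation from $\mu_{\R}=0$ forcing $S$ to be $B_i^*$-invariant and orthogonal to $\mathrm{im}\, C$, the resulting block decomposition into a stable-and-costable piece plus a pair of commuting normal matrices, and hence the stratification $\bigsqcup_{m}\mathcal M^{\mathrm{reg}}(r,n-m)\times\mathrm{Sym}^m(\R^4)$. Your sketch is correct (note that since $U(n)$ is compact the plain quotient causes no orbit-closure issues), and you rightly isolate the one genuinely analytic step, the identification of the $\mathrm{Sym}^m(\R^4)$ factor with the Uhlenbeck bubbling locus, which is exactly what the cited corollary supplies.
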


Denote 
\[
\mathcal M_i(r,n)=\bigl \{  (B_1, B_2, C, D)\in N\mid \mu(B_1, B_2, C, D)=i  \bigr\}/U(n).
\]
Clearly, $\mathcal M_i(r,n)$ is also a \hK reduction of $N$ but with respect to a different value of the moment map. This is a non-singular \hK manifold, which was extensively studied by Nakajima~\cite{Nakajima:94, Nakajima_Resolutions:94,Nakajima:99}. In particular, $\mathcal M_i(r, n)$ is equipped with an $S^1$--action, which preserves one complex structure, say $J_1$, and rotates the other two complex structures. Moreover, there is a $(J_1, I_1)$--holomorphic map
\[
\pi\colon \mathcal M_i(r,n)\longrightarrow\mathcal M_0(r,n).
\]
By putting $B_1=0=B_2$ we see that  $M_i(r,n)$ contains  a \hK submanifold $M_{n,r}(\H)\hkred_{\mu=i}U(n)=\{\mu(0,0, C,D)=i\}/U(n)$, which is biholomorphic to $T^*Gr_n(\C^r)$ with respect to $J_1$ (details can be found for instance in~\cite[p.303]{haydys_hk:08}).  The complex conjugation on $M_{n,r}(\C)\oplus M_{r,n}(\C)\cong M_{n,r}(\H)$ induces a $J_1$--antiholomorphic map $c$ on $T^*Gr_n(\C^r)$. Hence we obtain a $(J_1, I_1)$--antiholomorphic map 
\[
\tau\colon T^*Gr_n(\C^r)\xrightarrow{\ c\ }T^*Gr_n(\C^r)
\hookrightarrow \mathcal M_i(r,n)\xrightarrow{\ \pi\ } \mathcal M_0(r,n).
\]
Letting $S^1$ act on $\mathcal M_i(r,n)$ via 
\[
z\cdot (B_1, B_2, C, D)= (B_1, z^p B_2, C, z^p D)
\]
the map $\tau$ becomes $S^1$--equivariant. Thus, recalling that composition of a holomorphic section  with $\tau$ results in a harmonic spinor, we obtain the following result.

\begin{thm}
Let $X$ be a compact K\"ahler surface. Then the same data as in Proposition~\ref{Prop_HolSectionsT*Gr} determine a harmonic section of $\bM_0(r,n)= P_{red}\times_{S^1}\mathcal M_0(r,n)$ at least away from the singular locus.\qed
\end{thm}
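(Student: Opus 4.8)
The plan is to assemble the pieces that have already been developed in the preceding pages, so the proof is essentially a bookkeeping argument chaining together three functorial constructions. First I would recall the setup: the data of Proposition~\ref{Prop_HolSectionsT*Gr} — a holomorphic rank-$n$ bundle $E\to X$ generated by $r$ global sections, together with a holomorphic section of $(\underline\C^r/E)^\vee\otimes E\otimes K_X^p$ — is precisely equivalent, by that proposition, to a holomorphic section $u_1\in\Gamma(\bM_1)$ where $\bM_1=P_{red}\times_{S^1}T^*Gr_n(\C^r)$ and $S^1$ acts on $T^*Gr_n(\C^r)$ by weight $p$ on the cotangent fibers. So the first step produces $u_1$.

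Next I would invoke the machinery of Subsection~\ref{Subsect_DiracOnKaehler} on composition with an antiholomorphic equivariant map. The target $M$ here is $\mathcal M_0(r,n)$ with its hyperK{\"a}hler structure (away from the singular locus), equipped with the residual $SO(3)$-action coming from the hyperK{\"a}hler reduction — one checks that $S^1\subset SO(3)$ fixes $I_1=J_1$ and rotates the other two complex structures, as required for the theorem on harmonic spinors. The map $\tau\colon T^*Gr_n(\C^r)\to\mathcal M_0(r,n)$ built just above, namely $\pi\circ(\text{inclusion})\circ c$, is $(J_1,I_1)$-antiholomorphic and, for the indicated $S^1$-action $z\cdot(B_1,B_2,C,D)=(B_1,z^pB_2,C,z^pD)$ on $\mathcal M_i(r,n)$ (which descends along $\pi$), is $S^1$-equivariant of the matching weight $p$. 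Hence composing $u_1$ with the induced fiber map $\bM_1\to\bM_0(r,n)$, $[\hat p,m_1]\mapsto[\hat p,\tau(m_1)]$, yields a section $u\in\Gamma(\bM_0(r,n))$.

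The third step is to conclude that $u$ is harmonic. By the remark preceding the theorem in Subsection~\ref{Subsect_DiracOnKaehler} (the observation there that composing a holomorphic section with an $S^1$-equivariant $(J_1,I_1)$-antiholomorphic map gives a harmonic spinor), together with that theorem itself — harmonicity of $u$ is equivalent to $\partial u=0$, and $\partial u=0$ follows from $\bar\partial u_1=0$ and the antiholomorphy of $\tau$ — we get $\dirac u=0$ wherever everything is defined. Since $\mathcal M_0(r,n)$ carries its hyperK{\"a}hler structure only away from its singular locus, the construction and the conclusion hold only over the open set of $X$ where $u$ avoids that locus; this accounts for the ``at least away from the singular locus'' in the statement.

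I expect the only genuine subtlety — and the reason the theorem is stated with ``$\qed$'' rather than given a full proof — is the equivariance and weight-matching bookkeeping: one must verify that the several $S^1$-actions (on $T^*Gr_n(\C^r)$, on the submanifold $M_{n,r}(\H)\hkred_{\mu=i}U(n)\subset\mathcal M_i(r,n)$, on $\mathcal M_i(r,n)$ itself, and its descent to $\mathcal M_0(r,n)$ along $\pi$) are mutually compatible and that $\tau$ intertwines them with the weight $p$ used to form $P_{red}\times_{S^1}(-)$. The hyperK{\"a}hler and complex-structure claims about $\mathcal M_i(r,n)$ and $\pi$ are quoted from Nakajima's work and from~\cite{haydys_hk:08}, so the only thing left to check by hand is that the inclusion $T^*Gr_n(\C^r)\hookrightarrow\mathcal M_i(r,n)$ is $(J_1)$-holomorphic and $S^1$-equivariant, which is immediate from the coordinate description $B_1=B_2=0$. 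Everything else is a direct appeal to results already established above.
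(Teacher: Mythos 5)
Your proposal is correct and follows essentially the same route as the paper, which proves this theorem implicitly (hence the \verb"\qed" in the statement) by the construction in the immediately preceding paragraph: Proposition~\ref{Prop_HolSectionsT*Gr} supplies the holomorphic section $u_1$ of $P_{red}\times_{S^1}T^*Gr_n(\C^r)$, the map $\tau=\pi\comp(\text{inclusion})\comp c$ is the $S^1$--equivariant $(J_1,I_1)$--antiholomorphic map, and the general composition principle from Subsection~\ref{Subsect_DiracOnKaehler} yields harmonicity away from the singular locus. Your identification of the weight-matching and equivariance bookkeeping as the only content left to verify is exactly the right reading of why no separate proof is given.
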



\section{Generalized Seiberg--Witten equations: examples}\label{Sect_gSW}

In this section a generalization of the Seiberg--Witten equations first introduced in~\cite{Taubes:99} and~\cite{Pidstrygach:04} is considered. This generalization makes sense for any four-manifold (or three-manifold) but for the sake of simplicity only the case $X=\R^4$ is considered here. 

On $\R^4$ the \gSW equations can be defined as follows. Let $M$ be  a \hK manifold  equipped with a tri-Hamiltonian action of a Lie group $G$. Assume that $\mathfrak g$ is endowed with an $ad$-invariant scalar product and denote by $\mu\colon M\to\mathfrak g\otimes \ImH$ the corresponding moment map. For  a pair $(u, a)\in C^\infty(\R^4; M)\times \Om^1(\R^4; \mathfrak g)$, where $a$ should be thought of as a connection on a trivial $G$--bundle,  consider the following equations
\begin{equation}\label{Eq_gSW}
  \dirac_a u=0,\qquad F_a^+=\mu\comp u, 
\end{equation}
which are called the \gSW equations. The first equation is already familiar from Section~\ref{Sect_genDirac}, while the second one needs a little explanation. Identifying $\Lambda^2_+(\R^4)^*$ with $\ImH$, one can interpret $\mu\comp u$ as a self-dual $2$-form on $\R^4$ with values in $\mathfrak g$. This matches the term on the left hand side.


A natural parameter of the construction is the target manifold $M$ together with the $G$--action. Gauge theories, which can be obtained for different choices of $M$, are considered in some details below. The simplest example is $M=\H$. For $G=S^1$, which acts by the multiplication  on the right, one recovers the classical Seiberg--Witten equations. The details are left to the reader.

\subsection{Vafa--Witten equations}

 Consider the flat \hK manifold $M=\mathfrak g\otimes\H$ as the target manifold. Let $G$ act on $M$ by the ``quaternization'' of the adjoint action. The corresponding moment map is given by
\begin{equation*}
  \begin{aligned}
  \mu(\xi) &=  \bigl ([\xi_2,\xi_3]+[\xi_0,\xi_1]\bigr )\otimes i +
                      \bigl ([\xi_3,\xi_1]+[\xi_0,\xi_2]\bigr )\otimes j + 
                     \bigl ([\xi_1,\xi_2]+[\xi_0,\xi_3]\bigr )\otimes k\\
                 &= \sigma(\im\xi)+[\re\xi,\im\xi],    
  \end{aligned}
\end{equation*}
where $\xi=\xi_0+(\xi_1i+\xi_2j+\xi_3k)= \re \xi +\im \xi$.

Furthermore, a map $u\colon \R^4\to \mathfrak g\otimes\H$ can be identified with a pair $(c,b)\in\Om^0(\R^4;\mathfrak g)\oplus \Om^2_+(\R^4;\mathfrak g)$, where $c$ is the real part of $u$ and $b$ is obtained from the imaginary part of $u$ via the identification $\ImH\cong \Lambda^2_+(\R^4)^*$. The corresponding Dirac operator is $\dirac_a(c,b)= d_ac+ \delta_a^+b$. Hence, in the case $M=\mathfrak g\otimes\H$, the \gSW equations are
\begin{equation*}
  \begin{aligned}
    &d_ac+ \delta_a^+ b=0,\\
    & F_a^+-\sigma(b)+[b,c]=0,
  \end{aligned}
\qquad (a,b,c)\in\Om^1(\R^4;\mathfrak g)\times \Om^1(\R^4;\mathfrak g)\times \Om^0(\R^4;\mathfrak g).
\end{equation*}
These equations first appeared in~\cite{VafaWitten:94_StrongCouplingTest} and are known as the Vafa--Witten equations. Notice that the Vafa--Witten equations make sense on any (oriented Riemannian) four-manifold. 

\subsection{Anti-self-duality equations on $G^\C$--bundles}

The previous example admits a different interpretation. Namely, the target manifold is chosen again to be $M=\mathfrak g\otimes \H$ but this time a map $u\colon \R^4\to \mathfrak g\otimes \H$  is identified with some $b\in\Om^1(\R^4;\mathfrak g)$ according to the rule
\[
u=\xi_0+\xi_1i+\xi_2j+\xi_3k\equiv \sum_{p=0}^3\xi_pdx_p=b. 
\]
The group $G$ acts on $M$ in the same manner as in the previous example and therefore the moment map is given by the same expression. However this is  also interpreted in a different way. Namely, for $b\in \Om^1(\R^4;\mathfrak g)$ a straightforward computation shows that the self-dual 2-form $\mu\comp b$ is in fact $\frac 12[b\wedge b]^+$, where the symbol $[\cdot\wedge\cdot]$ stays for a combination of the wedge product and the Lie--brackets. The Dirac operator acting on 1-forms was already mentioned above and equals $\delta_a + d_a^+$. 
 Hence, in this case the \gSW equations take the following form
  \begin{align}
    & \delta_a b=0,\label{Eq_cxASD1}\\
    & d_a^+b=0,\label{Eq_cxASD2}\\
    & F_a^+-\tfrac 12[b\wedge b]^+=0.\label{Eq_cxASD3}
  \end{align}

To see the geometric meaning of equations~\eqref{Eq_cxASD1}-\eqref{Eq_cxASD3}, consider the 1-form $A=a+ib\in\Om^1(\R^4;\mathfrak g^\C)$. Interpreting $A$ as a connection on a trivial $G^\C$--bundle, one obtains
\[
F_A^+=\bigl ( F_a + id_ab -\tfrac 12[b\wedge b] \bigr )^+= F_a^+-\tfrac 12[b\wedge b]^+ + id_a^+ b.
\]
Hence, Equations~\eqref{Eq_cxASD2} and~\eqref{Eq_cxASD3} mean that $A=a+ib$ is anti-self-dual. Notice that these equations are invariant with respect to the \emph{complex} gauge group $Map(\R^4; G^\C)$. 

It remains to clarify the meaning of~\eqref{Eq_cxASD1}. Notice that for a compact manifold $X$ the space $\Om^1(X;\mathfrak g^\C)$ has a natural K\"ahler metric, which is preserved by the action of the \emph{real} gauge group $Map(X; G)$. Then $(a,b)\mapsto \delta_a b$ is the moment map of this action. The same conclusion holds for $X=\R^4$ provided $\Om^1(X;\mathfrak g^\C)$ is replaced by a suitable Sobolev space. Thus, solutions of~\eqref{Eq_cxASD1}--\eqref{Eq_cxASD3} are those anti-self-dual connections, which are in the zero level set of the moment map of the real gauge group. In other words,~\eqref{Eq_cxASD1} is a ``stability condition''.

For the sake of brevity solutions of~\eqref{Eq_cxASD1}--\eqref{Eq_cxASD3} are called (stable) complex anti-self-dual connections. 
\bigskip

The moduli space of complex asd connections has some interesting properties, which are best seen from a more general perspective. For this reason it is convenient do deviate from the convention to work exclusively with $\R^4$ as the base four-manifold. Thus, let $X$ be a closed Riemannian oriented four-manifold. Choose a principal $G$--bundle $P\to X$ and denote $ad\, P=P\times_G\mathfrak g$.   Equations~\eqref{Eq_cxASD1}--\eqref{Eq_cxASD3} for a general four-manifold can be written in exactly the same form with the understanding that $a$ stays for a connection on $P$ and $b$ is a 1--form on $X$ with values in $ad\, P$. The geometric meaning remains also valid in this case if $A=a+ib$ is interpreted as a connection on the corresponding principal $G^\C$--bundle $\mathcal P=P\times_GG^\C$. 

\begin{rem}
  Strictly speaking, on general four-manifold the complex anti-self-duality equations do not quite fit into the concept of~\cite{Pidstrygach:04}. The reason is, roughly speaking, that $Sp_-(1)$ must act non-trivially on the target manifold.
\end{rem}

Let $\Mcasd (\mathcal P)$ denote the moduli space of complex asd connections. Clearly the moduli space of real asd connections $\mathcal M_{asd}(P)$ is contained in $\Mcasd (\mathcal P)$. By looking at the deformation complex of~\eqref{Eq_cxASD1}-\eqref{Eq_cxASD3}
\[
0\to\Om^0(ad\, P)\to \Om^1(ad\, P)\oplus \Om^1(ad\, P)\to \Om^0(ad\, P)\oplus \Om^2_+(ad\, P) \oplus \Om^2_+(ad\, P)\to 0
\]
it is easy to see that the expected dimension of $\Mcasd (\mathcal P)$ is twice the expected dimension of $\mathcal M_{asd}(P)$.

\begin{thm}\label{Thm_ModuliOfCasd}
  Let $X$ be a closed oriented Riemannian four-manifold. Assume that both  $\mathcal M_{asd}(P)$ and $\Mcasd (\mathcal P)$ are manifolds of expected dimensions. Then the following holds:
  \begin{itemize}
  \item[(i)] $\Mcasd (\mathcal P)$ is K\"ahler;
  \item[(ii)] $\mathcal M_{asd}(P)$ is a Lagrangian submanifold of $\Mcasd (\mathcal P)$;
  \item[(iii)] If $X$ is K\"ahler, then  $\Mcasd (\mathcal P)$ is \hK and  $\mathcal M_{asd}(P)$ is a complex Lagrangian submanifold.
  \end{itemize}
\end{thm}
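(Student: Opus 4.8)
The plan is to realize $\Mcasd(\mathcal P)$ as an infinite-dimensional (hyper)K\"ahler quotient and read off all three assertions from the general formalism, following the pattern of the ADHM/Nakajima picture recalled in Section~\ref{Sect_genDirac}. Let $\mathcal A(P)$ be the affine space of connections on $P$ and consider the configuration space $\mathscr C=\{(a,b)\}=\mathcal A(P)\times\Om^1(X;ad\,P)$, which I identify with $\mathcal A(\mathcal P)$ via $A=a+ib$, equipped with the flat metric coming from the Riemannian metric on $X$ and the $ad$-invariant product on $\mathfrak g$. The first task is to exhibit the K\"ahler (resp.\ hyperK\"ahler) structure on $\mathscr C$: the tangent space at each point is $\Om^1(X;ad\,P)\oplus\Om^1(X;ad\,P)$, and multiplication by $i$ in the $G^\C$-picture gives one complex structure; when $X$ is itself K\"ahler, the $SO(3)$-worth of self-dual forms acts on the $\Om^1$-factors and upgrades this to a quaternionic triple, exactly as in the discussion preceding the theorem (this is also why on a general $X$ only the single complex structure survives — cf.\ the Remark about $Sp_-(1)$ acting nontrivially). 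All of this is a linear, constant-coefficient computation.

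Next I identify the equations as moment-map conditions. Equation~\eqref{Eq_cxASD3} together with~\eqref{Eq_cxASD2}, i.e.\ $F_A^+=0$, is the moment map for the complex gauge group $\mathcal G^\C=Map(X;G^\C)$ acting on $\mathscr C=\mathcal A(\mathcal P)$: this is the classical Atiyah--Bott/Donaldson picture, where $a\mapsto F_a^+$ is the moment map for $\mathcal G$ acting on $\mathcal A(P)$, now complexified. Equation~\eqref{Eq_cxASD1}, $\delta_a b=0$, is — as the text already notes — the moment map for the \emph{real} gauge group $\mathcal G=Map(X;G)$ acting on $\mathscr C$ with its K\"ahler structure. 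So $\Mcasd(\mathcal P)$ is the K\"ahler quotient $\big(F_A^+{=}0\big)\cap\big(\delta_a b{=}0\big)/\mathcal G$, equivalently $\{F_A^+=0\}/\mathcal G^\C$ in the GIT-picture; when $X$ is K\"ahler, the two additional real structures pair $\delta_a b$ with two further self-dual components of $F_A$, so that $\Mcasd(\mathcal P)=\mu^{-1}(0)/\mathcal G$ with $\mu$ a triple of moment maps, giving the hyperK\"ahler reduction. Assertion (i) and the hyperK\"ahler half of (iii) then follow from the standard theorem that a (hyper)K\"ahler quotient by a free action at a regular value is (hyper)K\"ahler — and the hypothesis that both moduli spaces have their expected dimensions is precisely what guarantees smoothness and lets me invoke this.

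For (ii), the point is that $\mathcal M_{asd}(P)\subset\Mcasd(\mathcal P)$ is cut out by $b=0$, and $\{b=0\}$ is a totally real, maximal-isotropic subspace for the complex structure $A\mapsto iA$: indeed the K\"ahler form on $\mathscr C$ pairs the $da$-directions with the $db$-directions, so it vanishes identically on the tangent space $\Om^1(X;ad\,P)\oplus 0$ of the real locus. This property descends to the quotient at smooth points, and the dimension count — $\dim\Mcasd=2\dim\mathcal M_{asd}$, from the deformation complex displayed just before the theorem — confirms that the isotropic submanifold is in fact Lagrangian. When $X$ is K\"ahler, one of the remaining complex structures, say $I_1$, preserves $\{b=0\}$ while $I_2,I_3$ send $da$-directions to $db$-directions; hence the real locus is $I_1$-complex and $I_2$-, $I_3$-Lagrangian, i.e.\ complex Lagrangian, which is (iii). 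I expect the only real obstacle to be the analytic bookkeeping — working in appropriate Sobolev completions so that $\mathcal G^\C$ acts smoothly, the slices are genuine manifolds, and "Lagrangian" and "$\Mcasd$ K\"ahler" make literal sense — but since we are allowed to assume both moduli spaces are smooth of expected dimension, the geometric content reduces to the finite-dimensional (hyper)K\"ahler-quotient lemmas applied formally, and the symplectic-orthogonality computation for the real locus is the one genuinely new check.
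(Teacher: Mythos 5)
Your overall architecture is the same as the paper's -- realize $\Mcasd(\mathcal P)$ by a reduction procedure on the flat configuration space $\mathcal A(P)\times\Om^1(ad\,P)$, with $\delta_ab$ as the moment map of the real gauge group, and get (ii) from the isotropy of the locus $\{b=0\}$ plus the dimension count (your direct check that $\om_1$ vanishes on $\Om^1(ad\,P)\oplus 0$ is equivalent to the paper's use of the antisymplectic involution $(a,b)\mapsto(a,-b)$). But the way you feed the curvature equations into that formalism contains two genuine errors. First, the assertion that $F_A^+=0$ is ``the moment map for the complex gauge group'' is not meaningful: $\mathcal G^\C$ does not preserve the K\"ahler form, and on a general Riemannian four-manifold $a\mapsto F_a^+$ is not the Atiyah--Bott moment map either (that map is $F_a\wedge\om_X$, a single component, and requires a symplectic form on $X$ which you are not assuming in (i) and (ii); $F_a^+$ has three components). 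What the argument actually requires -- and what the paper verifies -- is that $\mathcal A_{asd}(\mathcal P)=\{F_A^+=0\}$ is a \emph{complex} subvariety of the configuration space for the complex structure $I_1(\mathrm v,\mathrm w)=(-\mathrm w,\mathrm v)$, because $A\mapsto F_A^+$ is a holomorphic function of $A=a+ib$. Then (i) follows by K\"ahler reduction of this complex subvariety by the \emph{real} gauge group at the zero level of $\delta_ab$. You never state or check this holomorphicity, and the ``GIT picture'' you invoke presupposes it.

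Second, in the K\"ahler case your claim that $\Mcasd(\mathcal P)=\mu^{-1}(0)/\mathcal G$ for a triple of moment maps on all of $\mathcal A(\mathcal P)$ cannot be right as stated: the system \eqref{Eq_cxASD1}--\eqref{Eq_cxASD3} imposes $1+3+3=7$ real conditions per point, whereas a \hK moment map for $\mathcal G$ has only three components, so the naive triple-moment-map level set is strictly larger than the space of solutions. The correct bookkeeping (which the paper carries out) is to split the equations as in \eqref{Eq_casdOnKaehler1}--\eqref{Eq_casdOnKaehler2}: the three $\Lambda$-components form the \hK moment map of $\mathcal G$, while the remaining four components $F_{a+ib}^{2,0}=0=F_{a+ib}^{0,2}$ cut out a subvariety $\mathcal A^{1,1}(\mathcal P)$ whose tangent spaces are preserved by $I_1$ and $I_2$ (hence by $I_3$), so that it is a \hK subvariety; $\Mcasd(\mathcal P)$ is then the \hK reduction of $\mathcal A^{1,1}(\mathcal P)$, not of the whole configuration space. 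Finally, a cosmetic point: with the quaternionic triple written as in \eqref{Eq_KaehlerStr} and the displayed formulas for $I_2,I_3$, it is $I_2$ (not $I_1$) that preserves $\{b=0\}$, while $I_1$ and $I_3$ interchange the two factors; your statement is correct only up to relabeling the complex structures.
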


The rest of this subsection is devoted to the sketch of the proof of Theorem~\ref{Thm_ModuliOfCasd}. The configuration space $\mathcal A(P)\times \Om^1(ad\, P)\cong T^*\mathcal A(P)$ is a flat infinite dimensional K\"ahler manifold.\footnote{Strictly speaking, one should pass to a suitable Sobolev space to get a Banach manifold structure; Here and in the sequel we work in a smooth category for the sake of simplicity of exposition.} Indeed, the K\"ahler structure is given explicitly by
\begin{equation}\label{Eq_KaehlerStr}
I_1 (\mathrm v,\mathrm w)=(-\mathrm w,\mathrm v),\quad 
\om_1\bigl ((\mathrm v_1, \mathrm w_1), (\mathrm v_2, \mathrm w_2)\bigr )= 
-\langle \mathrm v_2, \mathrm w_1\rangle + \langle \mathrm v_1,\mathrm w_2\rangle, 
\end{equation}
where $\mathrm v,\mathrm w\in V=\Om^1(ad\, P)$. As already mentioned above, the moment map of the real gauge group $\mathcal G(P)$ is given by $(a,b)\mapsto \delta_a b$. Moreover,  $\mathcal A_{asd}(\mathcal P)=\{ A\in\mathcal A(\mathcal P)\mid F_A^+=0 \}$ is a complex subvariety of the configuration space. Then  $\Mcasd (\mathcal P)$ is the K\"ahler reduction of  $\mathcal A_{asd}(\mathcal P)$ with respect to the action of the real gauge group, hence a K\"ahler manifold.

To see \textit{(ii)}, observe that the antisymplectic involution $(a,b)\mapsto (a,-b)$ on the configuration space induces an antisymplectic involution $\tau$ on $\Mcasd (\mathcal P)$. The fixed point set of $\tau$ is $\mathcal M_{asd}(P)$, whose dimension equals $\frac 12\dim \Mcasd (\mathcal P)$. Hence,  $\mathcal M_{asd}(P)$ is a Lagrangian submanifold of $\Mcasd (\mathcal P)$.

It remains to show~\textit{(iii)}. Recall that for a K\"ahler surface $X$ there is the decomposition $\Om^2_+(X;\R)\cong \Om^0(X)\cdot \om_X\oplus \Om^{0,2}(X)$. Denote by $\Lambda\colon\Om^2(X)\to \Om^0(X)$ the adjoint operator of $L\colon\Om^0(X)\to \Om^2(X),\ \alpha\mapsto \alpha\om_X$. Then~\eqref{Eq_cxASD1}-\eqref{Eq_cxASD3} can be written in the form
\begin{align}
  &\delta_ab=0, \qquad\Lambda d_a^+b=0,\quad \Lambda (F_a^+-\tfrac 12 [b\wedge b]^+)=0,\label{Eq_casdOnKaehler1}\\ 
 & F_{a+bi}^{2,0}=0, \quad F_{a+bi}^{0,2}=0.\label{Eq_casdOnKaehler2}
\end{align}

Furthermore, for a Hermitian vector space $\bigl (V, \langle\cdot,\cdot\rangle+ i\om(\cdot,\cdot)\bigr )$ its complexification $V_\C\cong V\oplus V$ is a quaternion--Hermitian vector space. Explicitly, the quaternion--Hermitian structure is given by~\eqref{Eq_KaehlerStr} together with
\[
\begin{aligned}
  &I_2 (\mathrm v,\mathrm w)=(I\mathrm v,-I\mathrm w), &\qquad & 
\om_2\bigl ((\mathrm v_1, \mathrm w_1), (\mathrm v_2, \mathrm w_2)\bigr )= 
\om ( \mathrm v_1, \mathrm v_2) -\om( \mathrm w_1,\mathrm w_2), \\
&I_3 (\mathrm v,\mathrm w)=(I\mathrm w, I\mathrm v), &\qquad & 
\om_3\bigl ((\mathrm v_1, \mathrm w_1), (\mathrm v_2, \mathrm w_2)\bigr )= 
\om ( \mathrm w_1, \mathrm v_2 ) + \om( \mathrm v_1,\mathrm w_2). 
\end{aligned}
\]
This implies that $\mathcal A(\mathcal P)\cong \mathcal A(P)\times\Om^1(ad\, P)$ is a flat \hK manifold. A straightforward but somewhat lengthy computation shows that the action of the real gauge group $\mathcal G (P)$ preserves this \hK structure and the zero level set of the corresponding \hK moment map is given by solutions of~\eqref{Eq_casdOnKaehler1}. Furthermore, denote by $\mathcal A^{1,1}(\mathcal P)$ the space of solutions of~\eqref{Eq_casdOnKaehler2}. Clearly, the tangent bundle of $\mathcal A^{1,1}(\mathcal P)$ is preserved by $I_1$ and $I_2$. Therefore,  $\mathcal A^{1,1}(\mathcal P)$ is a \hK submanifold of $\mathcal A(\mathcal P)$. Thus, for a K\"ahler surface $X$ the moduli space $\Mcasd(\mathcal P)$ is the \hK reduction of $\mathcal A^{1,1}(\mathcal P)$, hence a \hK manifold. The remaining part of \textit{(iii)} is shown in a similar manner to~\textit{(ii)}.

\subsection{$Spin(7)$--instantons}\label{Subsect_Spin7Instantons}

At first, it is convenient to recall the notion of a $Spin(7)$--instanton, which appeared in the mathematical literature in~\cite{DonaldsonThomas:98} for the first time. To do this, fix a splitting $\R^8=U\oplus V$, where $U\cong\H\cong V$. Let $\theta$ (resp. $\eta$) denote the projection onto the first (resp. second) subspace. Think of $\theta$ and $\eta$ as $\H$--valued 1--forms on $\R^8$. The stabilizer of the Cayley form
\[
\Om=-\frac 1{24}\re{\Bigl ( \theta\wedge\bar\theta\wedge \theta\wedge\bar\theta 
 -6\,  \theta\wedge\bar\theta\wedge  \eta\wedge\bar\eta
+ \eta\wedge\bar\eta\wedge  \eta\wedge\bar\eta \Bigr )},
\]  
 is~\cite{BryantSalamon:89} the subgroup $Spin(7)\subset SO(8)$. The Cayley form gives rise to the linear map
\[
\Lambda^2(\R^8)^*\rightarrow \Lambda^2(\R^8)^*,\qquad \om\mapsto -*(\om\wedge \Om),
\]
which has two eigenvalues $3$ and $-1$. The corresponding eigenspaces $\Lambda^2_+(\R^8)^*$ and $\Lambda^2_-(\R^8)^*$ are of dimensions $7$ and $21$ respectively. Hence, just like in 4 dimensions, there is the decomposition of the space of 2-forms:
\begin{equation}\label{Eq_DecompOf2FormsInDim8}
  \Om^2(\R^8)= \Om^2_+(\R^8)\oplus \Om^2_-(\R^8).
\end{equation}
Then a connection $A$ on a $G$--bundle over $\R^8$ is called a $Spin(7)$--instanton, if $F_A^+=0$.
\begin{rem}
  In general, the base manifold $\R^8$ can be replaced by a Riemannian eight-manifold with holonomy $Spin(7)$. 
\end{rem}

Notice also that the splitting $\R^8=U\oplus V$ leads to the decomposition 
\[
\Lambda^k(\R^8)^*= \bigoplus_{p+q=k}\Lambda^{p,q}(\R^8)^*,\qquad\text{where}\quad
\Lambda^{p,q}(\R^8)^*\cong \Lambda^p U^*\otimes\Lambda^q V^*.
\]
Hence, there is a similar decomposition of differential forms on $\R^8$:
\[
\Om^k(\R^8)=\bigoplus_{p+q=k}\Om^{p,q}(\R^8).
\]
In particular, any connection on the trivial bundle $\underline{G}\to\R^8$ can be uniquely written as $A=a+b$, where $a\in\Om^{1,0}(\R^8;\mathfrak g)$ and $b\in\Om^{0,1}(\R^8;\mathfrak g)$. Think of $b$ as a family of connections on $\R^4\cong V\subset \R^8$ parametrized by $U$. Let $F_b$ denote the corresponding family of curvatures. Then a computation~\cite{Haydys:12_GaugeTheory_jlms} shows that $A$ is a $Spin(7)$--instanton if and only if
\[
(F_A^{1,1})^+=0\quad\text{and}\quad F_a^+=F_b^+.
\]
Notice that the superscript ``+'' in the first equation is used in the sense of decomposition~\eqref{Eq_DecompOf2FormsInDim8}, while in the second one in the sense of the four-dimensional analogue of~\eqref{Eq_DecompOf2FormsInDim8}. 

\medskip

Putting aside $Spin(7)$--instantons for a while, consider the \gSW equations for the target \hK manifold $\Om^1(\R^4;\mathfrak g)$, which is interpreted as the space of connections on the trivial bundle $\underline G\to \R^4$. The gauge group $Map(\R^4; G)$ acts on $\Om^1(\R^4;\mathfrak g)$ preserving its \hK structure. The corresponding moment map is well-known to be $\mu(b)=F_b^+$. Hence, in this case the \gSW equations can be written as
\[
\dirac_a b=0,\qquad F_a^+=F_b^+.
\] 
A somewhat lengthy computation, which can be found in details in~\cite{Haydys:12_GaugeTheory_jlms}, shows that $\dirac_a b=(F_A^{1,1})^+$, where $A=a+b$. Thus, for $M=\Om^1(\R^4;\mathfrak g)$ the \gSW equations yield $Spin(7)$--instantons.

\begin{rem}
If $\R^4$ is replaced by a spin four-manifold $X$, the \gSW equations with the target manifold  $M=\Om^1(\R^4;\mathfrak g)$ yield up to a zero-order term $Spin(7)$--instantons on the total space of the spinor bundle of $X$~\cite{Haydys:12_GaugeTheory_jlms}.
\end{rem}

\begin{rem}\label{Rem_G2Monopoles}
By using similar arguments, one can also show that solutions of the \gSW equations with the target manifold  $M=\Om^1(\R^3;\mathfrak g)\times\Om^0(\R^3;\mathfrak g)$ can be interpreted as $G_2$--monopoles on $\R^7$.
\end{rem}

\subsection{Five-dimensional instantons}\label{Subsect_5dInstantons}

Consider $M=\Om^0(\R;\,\mathfrak g\otimes\H)$ as the target manifold equipped with its flat \hK structure, where $\mathfrak g$ is the Lie algebra of a Lie group $G$. The gauge group $\mathcal G=Map(\R; G)$ acts on $\Om^0(\R;\,\mathfrak g\otimes\H)$, namely $g\cdot T = gTg^{-1}-\dot g g^{-1}$, where $\dot g$ is the derivative of $g$ with respect to the variable $t\in\R$. Then the \hK moment map $\mu=\mu_1i+\mu_2j+\mu_3k$ of this action is given by
\[
\begin{aligned}
  & \mu_1(T)=\dot T_1 + [T_0, T_1] -[T_2, T_3],\\
  & \mu_2(T)=\dot T_2 + [T_0, T_2] -[T_3, T_1],\\
  & \mu_3(T)=\dot T_3 + [T_0, T_3] -[T_1, T_2],
\end{aligned}
\]
where $T=T_0 + T_1i + T_2j+ T_3k$. 
Recalling the identification $\H=\R\oplus\Lambda^2_+(\R^4)^*$, a map $u\colon\R^4\to \Om^0(\R;\,\mathfrak g\otimes\H)$ can be interpreted as a map  $(c,b)\colon\R\to  \Om^0(\R^4;\,\mathfrak g)\times  \Om^2_+(\R^4;\,\mathfrak g)$. Similarly, a connection on $\underline{\mathcal G}\to \R^4$ can be interpreted as a map $a\colon\R\to\Om^1(\R^4;\mathfrak g)$. A little thought shows that the corresponding \gSW equations can be written in the form
\begin{equation}\label{Eq_5dInstantons}
\begin{aligned}
  &\dot a=\delta_a b + d_a c,\\
  &\dot b= F_a^+-\sigma (b) - [c,b].
\end{aligned}
\end{equation}
These are the five-dimensional instantons on $W^5=\R^4\times\R$~\cite[Eqns. (40)]{Haydys:10_FukayaSeidelCategory}. Equations~\eqref{Eq_5dInstantons} were discovered independently by Witten~\cite{Witten:12_FivebranesAndKnots_QT}. The author of this paper actually obtained five-dimensional instantons for the first time along the lines outlined above.  The reader can find some applications of five-dimensional instantons in the papers mentioned above.

\begin{rem}
  One can replace $\Om^0(\R;\,\mathfrak g\otimes\H)$ in the above construction  by the space $\tilde \Om^0(I;\,\mathfrak g\otimes\H)$ consisting of all maps $T\colon I=(-1, 1)\to\mathfrak g\otimes\H$ satisfying certain asymptotic conditions as $t\to\pm 1$. Then the \hK reduction of $\tilde \Om^0(I;\,\mathfrak g\otimes\H)$ is the moduli space of magnetic monopoles on $\R^3$. This is also the \hK reduction of the target manifold from Remark~\ref{Rem_G2Monopoles}. Hence, it is naturally to expect that there is some relation between five-dimensional instantons and $G_2$-monopoles. 
\end{rem}


\section{Remarks on three-manifolds}

As already mentioned above, there is a generalization of the Dirac operator for three-manifolds due to Taubes~\cite{Taubes:99}. Hence, one can also consider the \gSW equations in dimension three with the target manifolds as in the preceding section. It turns out that  the analogues of the Vafa--Witten equations and the complex anti-self-duality equations coincide and yield stable flat $G^\C$--connections. The compactness property of the moduli space of flat $PSL(2;\C)$--connections was recently studied by Taubes~\cite{Taubes12_PSL2Cconnections_ArxPrepr}. Finally, the construction of Subsection~\ref{Subsect_Spin7Instantons} leads to $G_2$--instantons, while that of Subsection~\ref{Subsect_5dInstantons} leads to the Kapustin--Witten equations~\cite{KapustinWitten:07_ElectrMagnDuality}.







\bibliographystyle{alphanum}

\begin{thebibliography}{AHDM}

\bibitem[AF]{AnselmiFre:95}
D.~Anselmi and P.~Fr{\'e}.
\newblock {Gauged hyperinstantons and monopole equations}.
\newblock {\em Phys. Lett. B}, 347(3-4):247--254, 1995.

\bibitem[ADHM]{ADHM:78}
M.~F. Atiyah, V.~G. Drinfel{\cprime}d, N.~J. Hitchin, and Yu.~I. Manin.
\newblock Construction of instantons.
\newblock {\em Phys. Lett. A}, 65(3):185--187, 1978.

\bibitem[BF]{BauerFuruta04_StableCohomRefinement}
S.~Bauer and M.~Furuta.
\newblock A stable cohomotopy refinement of {S}eiberg-{W}itten invariants. {I}.
\newblock {\em Invent. Math.}, 155(1):1--19, 2004.

\bibitem[BS]{BryantSalamon:89}
R.~Bryant and S.~Salamon.
\newblock {On the construction of some complete metrics with exceptional
  holonomy}.
\newblock {\em Duke Math. J.}, 58(3):829--850, 1989.

\bibitem[DK]{DonaldsonKronheimer:90}
S.~K. Donaldson and P.~B. Kronheimer.
\newblock {\em {The geometry of four-manifolds}}.
\newblock {Oxford Mathematical Monographs}. The Clarendon Press Oxford
  University Press, New York, 1990.
\newblock Oxford Science Publications.

\bibitem[DS]{DonaldsonSegal:09}
S.~Donaldson and E.~Segal.
\newblock Gauge theory in higher dimensions, {II}.
\newblock In {\em Surveys in differential geometry. {V}olume {XVI}. {G}eometry
  of special holonomy and related topics}, volume~16 of {\em Surv. Differ.
  Geom.}, pages 1--41. Int. Press, Somerville, MA, 2011.

\bibitem[DT]{DonaldsonThomas:98}
S.~K. Donaldson and R.~P. Thomas.
\newblock {Gauge theory in higher dimensions}.
\newblock In {\em {The geometric universe (Oxford, 1996)}}, pages 31--47.
  Oxford Univ. Press, Oxford, 1998.

\bibitem[Fei]{Feix:01}
B.~Feix.
\newblock {Hyperkahler metrics on cotangent bundles.}
\newblock {\em J. Reine Angew. Math.}, 532:33--46, 2001.

\bibitem[Fue]{Fueter:34}
R.~Fueter.
\newblock {Die {F}unktionentheorie der {D}ifferentialgleichungen {$\Delta u=0$}
  und {$\Delta\Delta u=0$} mit vier reellen {V}ariablen}.
\newblock {\em Comment. Math. Helv.}, 7(1):307--330, 1934.

\bibitem[Fur]{Furuta01_118Conj}
M.~Furuta.
\newblock Monopole equation and the {$\frac{11}8$}-conjecture.
\newblock {\em Math. Res. Lett.}, 8(3):279--291, 2001.

\bibitem[Hay1]{haydys_hk:08}
A.~Haydys.
\newblock {Hyper{K}{\"a}hler and quaternionic {K}{\"a}hler manifolds with
  {$S\sp 1$}-symmetries}.
\newblock {\em J. Geom. Phys.}, 58(3):293--306, 2008.

\bibitem[Hay2]{Haydys_ahol:08}
A.~Haydys.
\newblock {Nonlinear Dirac Operator and Quaternionic Analysis}.
\newblock {\em Communications in Mathematical Physics}, 281(1):251--261, 2008.

\bibitem[Hay3]{Haydys:10_FukayaSeidelCategory}
A.~Haydys.
\newblock {F}ukaya-{S}eidel category and gauge theory.
\newblock {\em arXiv:1010.2353}, 2010.

\bibitem[Hay4]{Haydys:12_GaugeTheory_jlms}
A.~Haydys.
\newblock Gauge theory, calibrated geometry and harmonic spinors.
\newblock {\em J. Lond. Math. Soc.}, http://dx.doi.org/10.1112/jlms/jds008,
  2012.

\bibitem[HL]{HarveyLawson:82}
R.~Harvey and B.~Lawson.
\newblock {Calibrated geometries}.
\newblock {\em Acta Math.}, 148:47--157, 1982.

\bibitem[HKLR]{HitchinAO:87}
 Hitchin, N.~J., Karlhede, A., Lindstr{\"o}m, U., and Ro{\v{c}}ek, M.
\newblock Hyper-{K}\"ahler metrics and supersymmetry.
\newblock {\em Comm. Math. Phys. 108}, 4 (1987), 535--589.

\bibitem[HNS]{Salamon_hKFloer:08}
S.~Hohloch, G.~Noetzel, and D.~A. Salamon.
\newblock Hypercontact structures and {F}loer homology.
\newblock {\em Geom. Topol.}, 13(5):2543--2617, 2009.

\bibitem[Kal]{Kaledin:99}
D.~Kaledin.
\newblock A canonical hyperk{\"a}hler metric on the total space of a cotangent
  bundle.
\newblock In {\em Quaternionic structures in mathematics and physics ({R}ome,
  1999)}, pages 195--230. Univ. Studi Roma ``La Sapienza'', Rome, 1999.

\bibitem[KW]{KapustinWitten:07_ElectrMagnDuality}
A.~Kapustin and E.~Witten.
\newblock Electric-magnetic duality and the geometric {L}anglands program.
\newblock {\em Commun. Number Theory Phys.}, 1(1):1--236, 2007.

\bibitem[LeB]{LeBrun96_4MfldsWithoutEinsteinM}
C.~LeBrun.
\newblock Four-manifolds without {E}instein metrics.
\newblock {\em Math. Res. Lett.}, 3(2):133--147, 1996.

\bibitem[LM]{LawsonMichelsohn:89}
B.~Lawson and M.-L. Michelsohn.
\newblock {\em {Spin geometry}}, volume~38 of {\em {Princeton Mathematical
  Series}}.
\newblock Princeton University Press, Princeton, NJ, 1989.

\bibitem[Mar]{Marcolli:99}
M.~Marcolli.
\newblock {\em {Seiberg-{W}itten gauge theory}}, volume~17 of {\em {Texts and
  Readings in Mathematics}}.
\newblock Hindustan Book Agency, New Delhi, 1999.
\newblock With an appendix by the author and Erion J. Clark.

\bibitem[McL]{McLean:98}
R.~McLean.
\newblock {Deformations of calibrated submanifolds}.
\newblock {\em Comm. Anal. Geom.}, 6(4):705--747, 1998.

\bibitem[Moo]{Moore:96}
J.~Moore.
\newblock {\em {Lectures on {S}eiberg-{W}itten invariants}}, volume 1629 of
  {\em {Lecture Notes in Mathematics}}.
\newblock Springer-Verlag, Berlin, 1996.

\bibitem[Mor]{Morgan:96}
J.~Morgan.
\newblock {\em {The {S}eiberg-{W}itten equations and applications to the
  topology of smooth four-manifolds}}, volume~44 of {\em {Mathematical Notes}}.
\newblock Princeton University Press, Princeton, NJ, 1996.

\bibitem[Nak1]{Nakajima:94}
H.~Nakajima.
\newblock {Instantons on ALE spaces, quiver varieties, and Kac-Moody algebras.}
\newblock {\em Duke Math. J.}, 76(2):365--416, 1994.

\bibitem[Nak2]{Nakajima_Resolutions:94}
H.~Nakajima.
\newblock {Resolutions of moduli spaces of ideal instantons on {${\bf R}\sp
  4$}}.
\newblock In {\em {Topology, geometry and field theory}}, pages 129--136. World
  Sci. Publ., River Edge, NJ, 1994.

\bibitem[Nak3]{Nakajima:99}
H.~Nakajima.
\newblock {\em {Lectures on {H}ilbert schemes of points on surfaces}},
  volume~18 of {\em {University Lecture Series}}.
\newblock American Mathematical Society, Providence, RI, 1999.

\bibitem[Nic]{Nicolaescu:00}
L.~Nicolaescu.
\newblock {\em {Notes on {S}eiberg-{W}itten theory}}, volume~28 of {\em
  {Graduate Studies in Mathematics}}.
\newblock American Mathematical Society, Providence, RI, 2000.

\bibitem[Pid]{Pidstrygach:04}
V.~Ya. Pidstrygach.
\newblock {Hyper-{K}{\"a}hler manifolds and the {S}eiberg-{W}itten equations}.
\newblock {\em Tr. Mat. Inst. Steklova}, 246(Algebr. Geom. Metody, Svyazi i
  Prilozh.):263--276, 2004.

\bibitem[SW]{SeibergWitten:94}
N.~Seiberg and E.~Witten.
\newblock {Electric-magnetic duality, monopole condensation, and confinement in
  {$N=2$} supersymmetric {Y}ang-{M}ills theory}.
\newblock {\em Nuclear Phys. B}, 426(1):19--52, 1994.

\bibitem[Tau1]{Taubes:99}
C.~Taubes.
\newblock {Nonlinear generalizations of a {$3$}-manifold's {D}irac operator}.
\newblock In {\em {Trends in mathematical physics (Knoxville, TN, 1998)}},
  volume~13 of {\em {AMS/IP Stud. Adv. Math.}}, pages 475--486. Amer. Math.
  Soc., Providence, RI, 1999.

\bibitem[Tau2]{Taubes00_SWGW}
C.~Taubes.
\newblock {\em Seiberg {W}itten and {G}romov invariants for symplectic
  {$4$}-manifolds}, volume~2 of {\em First International Press Lecture Series}.
\newblock International Press, Somerville, MA, 2000.
\newblock Edited by Richard Wentworth.

\bibitem[Tau3]{Taubes07_WeinsteinConj}
C.~Taubes.
\newblock The {S}eiberg-{W}itten equations and the {W}einstein conjecture.
\newblock {\em Geom. Topol.}, 11:2117--2202, 2007.

\bibitem[Tau4]{Taubes12_PSL2Cconnections_ArxPrepr}
C.~H. Taubes.
\newblock {PSL(2;C)} connections on 3-manifolds with {L}2 bounds on curvature.
\newblock {\em arXiv:1205.0514}, 2012.

\bibitem[Tia]{Tian:00}
G.~Tian.
\newblock {Gauge theory and calibrated geometry. {I}}.
\newblock {\em Ann. of Math. (2)}, 151(1):193--268, 2000.

\bibitem[VW]{VafaWitten:94_StrongCouplingTest}
C.~Vafa and E.~Witten.
\newblock A strong coupling test of {$S$}-duality.
\newblock {\em Nuclear Phys. B}, 431(1-2):3--77, 1994.

\bibitem[Wan]{Wang:89_ParallelSpinors}
M.~Wang.
\newblock Parallel spinors and parallel forms.
\newblock {\em Ann. Global Anal. Geom.}, 7(1):59--68, 1989.

\bibitem[Wit1]{Witten:94}
E.~Witten.
\newblock {Monopoles and four-manifolds}.
\newblock {\em Math. Res. Lett.}, 1(6):769--796, 1994.

\bibitem[Wit2]{Witten:12_FivebranesAndKnots_QT}
E.~Witten.
\newblock Fivebranes and knots.
\newblock {\em Quantum Topology}, 3(1):1--137, 2012.

\end{thebibliography}

\def\cprime{$'$}

\end{document}